\newtheorem*{ack}{Acknowledgements}
\newtheorem{theorem}{Theorem}
\newenvironment{proof}{\noindent{\bf Proof:}}{$\hfill \Box$ \vspace{10pt}}
\newtheorem{lem}{Lemma}
\newtheorem{defin}{Definition}
\newtheorem{theom}{Theorem}
\newtheorem{rem}{Remark}
\newcommand{\hel} {
\hskip2.5pt{\vrule height7pt width.5pt depth0pt}
\hskip-.2pt\vbox{\hrule height.5pt width7pt depth0pt}
\, }
\newcommand{\restr}{\hel}
\begin{document}

\title[On the local H\"older boundary smoothness of an analytic function\ldots]{On the local H\"older boundary smoothness of an analytic function in the unit ball compared with the smoothness of its modulus.}
\author{Ioann Vasilyev}
\thanks{This research was Supported by RFBR, grant no. 18-31-00250.}
\address{Universit\'e Paris-Est, LAMA (UMR 8050), 5 Boulevard Descartes, 77454, Champs sur Marne, France}
\address{St.-Petersburg Department of V.A. Steklov Mathematical Institute, Russian Academy of Sciences (PDMI RAS), Fontanka 27, St.-Petersburg, 191023, Russia}
\email{milavas@mail.ru}

\subjclass[2010]{26B35, 32A40, 32A26}
\keywords{H\"older classes, Mean oscillation, Holomorphic functions}
\date{\today}

\begin{abstract}
Local boundary smoothness of an analytic function $f$ in the unit ball of $\mathbb C^n$ is compared to the smoothness of its modulus. We prove that in dimensions $2$ and higher two different (and natural) conditions imposed on the zeros of $f$ imply two different drops of its smoothness compared to the smoothness of $|f|$. We also show that some of the drops are the best possible. 
\end{abstract}

\maketitle
\section{Introduction}
It is fairly well known that if $f$ is an analytic function in the unit disc $\mathbb D$ continuous up to the boundary, then the H\"older continuity of $f$ is less in general than that of $\phi=|f|\restr_{\partial\mathbb D}$. To discuss this phenomenon in more detail, recall the classical inner--outer factorization. Every (say, bounded) analytic function $F$ in $\mathbb D$ can be represented in the form $F=BSG,$ where $B$ is a Blaschke product, $S$ is a zero--free bounded analytic function whose boundary values are of modulus $1$ a.e. on $\mathbb T=\partial \mathbb D,$ and $G$ is an outer function. This means that $G$ is represented in the form
\begin{equation}
\label{tratata}
G(z)=\mathcal{O}_\varphi(z)=\exp\biggl[\frac{1}{2\pi}\int\limits_0^{2\pi}\frac{e^{i\theta}+z}{e^{i\theta}-z}\log\varphi(e^{i\theta})d\theta\biggl],
\end{equation}
\noindent where $\phi$ is a positive function satisfying $\log\phi \in L^p(\mathbb T).$ Note that $|f|= \phi$ a.e. on $\mathbb T$. We refer the reader to the books~\cite{nikol} and~\cite{garnet} for the properties of the outer functions on the unit disc.

It turns out that the presence of the Blaschke product $B$ hampers drastically the situation. By way of example we may consider the function $z^n,$ which is H\"older continuous, but its modulus of continuity is far as nice (especially if $n$ is large) as that of $|z^n \restr_{\mathbb T}|\equiv 1$. So, we restrict ourselves to the case where $B$ is absent, i.e., $f$ has no zeros in $\mathbb D$. It should be noted that this case is not so far of that of an outer $f$ because then $f(rz)$ is outer for every $r<1$ (below we shall see how this observation is used in the case of the complex ball). So, we state a theorem for an outer $f$, see~\cite{hsh}.
\begin{theom}(Carleson--Jacobs--Havin--Shamoyan)
\label{cjhsh}
Let $\alpha\in (0,1).$ Suppose that $f: \mathbb D \rightarrow \mathbb C$
is an outer function which has an $\alpha$-H\"older modulus on the boundary circle $\mathbb T$ of the open unit disc $\mathbb D$. Then the function $f$ itself is $\alpha/2$-H\"older on $\mathbb T$.
\end{theom}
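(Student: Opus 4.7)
My plan is to reduce the statement to the Hardy--Littlewood pointwise derivative characterization of H\"older classes: an analytic function $F$ on $\mathbb{D}$ extends to a $\beta$-H\"older function on $\overline{\mathbb{D}}$ if and only if $|F'(z)| \leq C(1-|z|)^{\beta-1}$ for $z\in\mathbb{D}$. Applied with $\beta = \alpha/2$, it suffices to prove $|f'(z)| \lesssim (1-|z|)^{\alpha/2-1}$.

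I would control $|f'(z)|$ as the product $|f(z)|\cdot|(\log f)'(z)|$ and estimate the two factors separately. The key estimate for the first factor comes from Jensen's inequality. Since $\log|f|^2 = P[\log\phi^2]$ (where $P$ denotes the Poisson extension), concavity of $\log$ gives $|f(z)|^2 \leq P[\phi^2](z)$. As $\phi$ is bounded and $\alpha$-H\"older, so is $\phi^2$, so the standard estimate $|P[\phi^2](z) - \phi^2(\zeta^*)| \lesssim (1-|z|)^\alpha$ with $\zeta^* = z/|z|$ yields
\[
|f(z)| \leq \phi(\zeta^*) + C(1-|z|)^{\alpha/2}.
\]
This is precisely the step where the exponent drops from $\alpha$ to $\alpha/2$.

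For the logarithmic derivative, differentiating \eqref{tratata} gives $f'(z)/f(z) = \frac{1}{\pi}\int e^{i\theta}(e^{i\theta}-z)^{-2}\log\phi(e^{i\theta})\,d\theta$. Using the vanishing $\int e^{i\theta}(e^{i\theta}-z)^{-2}\,d\theta = 0$, I subtract an appropriate constant from $\log\phi$ and split the integral into a neighborhood of $\zeta^*$ of radius comparable to $1-|z|$ and its complement. In the nondegenerate case, where $\phi(\zeta^*)$ is bounded away from zero, $\log\phi$ is $\alpha$-H\"older near $\zeta^*$, and a direct estimate gives $|(\log f)'(z)| \lesssim (1-|z|)^{\alpha-1}$, whence $|f'(z)| \lesssim (1-|z|)^{\alpha-1}$, which is even stronger than required. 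In the degenerate case, where $\phi(\zeta^*)$ is small, I use the pointwise bound $\phi(e^{i\theta}) \leq C(\phi(\zeta^*) + |e^{i\theta}-\zeta^*|^\alpha)$ coming from $\alpha$-H\"older regularity to control $|\log\phi|$, obtaining $|(\log f)'(z)| \lesssim (1-|z|)^{-1}$ (up to benign logarithmic factors). Combining with $|f(z)| \lesssim (1-|z|)^{\alpha/2}$ gives the required derivative estimate.

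The main obstacle will be handling the case in which $\phi$ has boundary zeros: there $\log\phi$ is unbounded, so the integral defining the logarithmic derivative is sensitive and can produce spurious logarithmic losses. These losses must be absorbed either by a sharper splitting argument that exploits cancellation in the Cauchy-type kernel $e^{i\theta}(e^{i\theta}-z)^{-2}$, or, more robustly, by replacing the Hardy--Littlewood pointwise derivative criterion with the equivalent mean-oscillation characterization of $\Lambda^{\alpha/2}$, which is more forgiving of borderline logarithmic factors.
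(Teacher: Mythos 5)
Your skeleton (Hardy--Littlewood derivative criterion, the outer representation, $|f'|=|f|\cdot|(\log f)'|$, a case split according to the size of $\phi(\zeta^*)$ against a power of $1-|z|$) is indeed the classical route to this theorem (the paper itself gives no proof and refers to Shirokov's book), but the step on which everything hinges is the one your plan gets wrong: the points where $\phi$ is very small. The H\"older hypothesis gives only the upper bound $\phi(e^{i\theta})\le \phi(\zeta^*)+C|e^{i\theta}-\zeta^*|^{\alpha}$, hence only an upper bound on $\log\phi$; it gives no pointwise control of how negative $\log\phi$ can be, and the negative part is controlled only in $L^1$ mean. Consequently your claimed bound $|(\log f)'(z)|\lesssim (1-|z|)^{-1}$ ``up to benign logarithmic factors'' in the degenerate case is false, and the loss is polynomial, not logarithmic: take $\phi(e^{i\theta})=\exp(-|\theta|^{-1/2})$ near $\theta=0$ and $\equiv 1$ away; this $\phi$ is Lipschitz with $\log\phi\in L^1(\mathbb T)$, yet for real $z=r$ a direct computation with the kernel $e^{i\theta}(e^{i\theta}-r)^{-2}$ gives $|(\log f)'(r)|\asymp (1-r)^{-3/2}$, while a bound using only $\|\log\phi\|_{L^1}$ is of order $(1-r)^{-2}$. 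So the product of your two estimates cannot be justified as stated, and switching to a mean-oscillation characterization of $\Lambda_{\alpha/2}$ does not repair a polynomial loss. The same defect reappears in your ``nondegenerate'' case: $\log\phi$ is $\alpha$-H\"older only on the arc where $|e^{i\theta}-\zeta^*|^{\alpha}\lesssim\phi(\zeta^*)$, and outside it $\phi$ may again vanish, so the far part of the integral faces exactly the same obstruction.

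The missing idea, which is the heart of the Carleson--Jacobs--Havin--Shamoyan argument, is to play the exponential smallness of $|f(z)|=\exp\bigl(P[\log\phi](z)\bigr)$, produced by the very same negative values of $\log\phi$, against the inflated logarithmic derivative, instead of using the crude bound $|f(z)|\le\phi(\zeta^*)+C(1-|z|)^{\alpha/2}$. Concretely, subtract a constant $c\asymp\max\bigl(\phi(\zeta^*),(1-|z|)^{\alpha}\bigr)$ (legitimate since the kernel annihilates constants), write $v=\log(\phi/c)=v_+-v_-$, and use $|e^{i\theta}-z|^{-2}\asymp P_z(\theta)/(1-|z|)$ to see that the contribution of $v_-$ to $|f(z)|\,|(\log f)'(z)|$ is at most a constant times
\begin{equation*}
c\,e^{P[v_+](z)}\,\frac{1}{1-|z|}\,P[v_-](z)\,e^{-P[v_-](z)}\lesssim \frac{c}{1-|z|},
\end{equation*}
by $\sup_{t\ge 0}te^{-t}<\infty$ and Jensen's inequality $e^{P[v_+](z)}\le P[\max(\phi/c,1)](z)\lesssim 1$; in the degenerate case $c\lesssim(1-|z|)^{\alpha/2}$ this is exactly $(1-|z|)^{\alpha/2-1}$. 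The positive part $v_+$ is the only piece your H\"older upper bound genuinely controls, and there your near/far splitting works. With this compensation mechanism inserted (in both cases of the dichotomy), your outline can be completed; without it, the central derivative estimate fails.
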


Several remarks are in order. First, Theorem~\ref{cjhsh} holds for all indices $\alpha \in \mathbb R_+$. Reportedly, this result was first proved by L. Carleson. Nevertheless, the only published proof of this theorem is published in the book~\cite{shir1}. We refer the reader to the paper~\cite{vkm} for a more detailed discussion of the history of this theorem. Second, we would like to draw the reader's attention to the fact that Theorem~\ref{cjhsh} was used by J. Brennann in his paper~\cite{bren}, where with the help of this result he characterized planar domains on which any analytic function admits polynomial approximation in the $L^p$ metric. Another application of the global Carleson--Jacobs--Havin--Shamoyan theorem was found in the paper~\cite{abak}, where the authors use it in order to classify cyclic subspaces of the harmonic Dirichlet spaces. We also mention the paper~\cite{mash} by Mashreghi and Shabankhah, where the Carleson--Jacobs--Havin--Shamoyan theorem was used to compare zero sets and uniqueness sets of functions in Dirichlet spaces. One more remark on Theorem~\ref{cjhsh} is that it was cited in papers~\cite{bruort},~\cite{dyak1},~\cite{dyak2} and~\cite{taywil}.

Surprisingly, since 2012 the interest to this range of problems arose again. Specifically, the following natural question was raised: suppose that $\phi=|f|$ satisfies a H\"older condition at one point of $\mathbb T$ only. What can be said about $f$ at the same point? The following local version of Theorem~\ref{cjhsh} was proved in the paper~\cite{vkm}.

\begin{theom}(Kislyakov--Vasin--Medvedev)
\label{local}
Let $\alpha\in (0,2).$ Suppose that $f: \mathbb D \rightarrow \mathbb C$
is an outer function  which has an $\alpha$-H\"older modulus at some point $\xi\in\mathbb T.$ Then for all arcs $I\subseteq \mathbb T$ containing $\xi$ the mean oscillation $\nu(f,I)$ satisfies
\begin{equation}
\label{pampum}
\nu(f,I):=\inf\limits_{a\in \mathbb C} \frac{1}{|I|}\int\limits_{I}\left|f(z)-a\right|dz\leq C |I|^{\frac{\alpha}{2}},
\end{equation}
where $C$ depends on $\alpha, \|\log |f|\|_{L^1(\mathbb T)}$ and the H\"older norm of $|f|$ only. 
\end{theom}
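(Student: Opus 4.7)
The plan is to split into cases according to whether $|f(\xi)|$ vanishes or not. Take $\xi=1$ without loss of generality, write $\varphi=|f|$, $u=\log\varphi$, and let $M$ denote the H\"older constant of $\varphi$ at $\xi$. If $\varphi(\xi)=0$, the H\"older hypothesis gives $\varphi(e^{i\theta})\le M|\theta|^\alpha$ on a neighborhood of $\xi$, so for any arc $I\ni\xi$ of length $\le 1$,
\[
\nu(f,I)\le\frac{1}{|I|}\int_I\varphi\le C|I|^\alpha\le C|I|^{\alpha/2},
\]
and the result follows immediately. From now on I assume $\varphi(\xi)>0$ and set $c:=\log\varphi(\xi)$. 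A short argument shows that $|c|$ is automatically bounded in terms of $M$ and $\|\log|f|\|_{L^1(\mathbb T)}$: on a fixed small neighborhood $I_0$ of $\xi$, $u$ differs from $c$ by at most $M|I_0|^\alpha$, so integrating over $I_0$ forces $|c|\,|I_0|$ to be controlled by $\|\log|f|\|_{L^1}+M|I_0|^{1+\alpha}$.

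Since $f$ is outer and hence non-vanishing in $\mathbb D$, the boundary identity $f=\exp(u+i\tilde u)$ holds, where $\tilde u$ denotes the conjugate function on $\mathbb T$, and the problem reduces to estimating $\nu(\tilde u,I)$. Set $w=u-c$, so that $w(\xi)=0$ and $|w|\le M|\cdot-\xi|^\alpha$ on $I_0$. Introduce a smooth cutoff $\chi$ which equals $1$ on $2I$ and is supported in $4I$ (assuming $|I|$ is small enough that $4I\subset I_0$; larger $|I|$ is handled by a trivial bound), and decompose $w=w_1+w_2$ with $w_1=\chi w$ and $w_2=(1-\chi)w$. For $w_1$, the H\"older estimate gives $\|w_1\|_{L^\infty}\le CM|I|^\alpha$, so by the classical $L^\infty\to\mathrm{BMO}$ boundedness of the conjugate operator one has $\|\tilde w_1\|_{\mathrm{BMO}}\le CM|I|^\alpha$, and therefore $\nu(\tilde w_1,I)\le CM|I|^\alpha$. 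For $w_2$, whose support lies at distance $\gtrsim|I|$ from $I$, the kernel $\cot((t-s)/2)$ is smooth in $t$ on $I$, and differentiating it yields for $t,t'\in I$
\[
|\tilde w_2(t)-\tilde w_2(t')|\le C|t-t'|\int_{|s-\xi|\ge|I|}\frac{|w_2(s)|}{|s-\xi|^2}\,ds.
\]
Splitting the inner integral into the H\"older region $|I|\le|s-\xi|\le|I_0|$ (where $|w_2(s)|\le M|s-\xi|^\alpha$) and the $L^1$-tail $|s-\xi|\ge|I_0|$ yields an integral of order $M|I|^{\alpha-1}$ for $\alpha<1$ (with a $\log(1/|I|)$ at $\alpha=1$ and $O(1)$ for $\alpha>1$), so after multiplying by $|t-t'|\le|I|$ one gets $|\tilde w_2(t)-\tilde w_2(t')|\le C|I|^{\min(\alpha,1)}$.

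Combining the two pieces yields $\nu(\tilde u,I)\le C|I|^{\min(\alpha,1)}\le C|I|^{\alpha/2}$, where the final step uses $\alpha<2$ and $|I|\le 1$. To conclude, pick the optimal constant $v_I$ (for instance a median of $\tilde u$ on $I$), set $a=e^c e^{iv_I}$, and use the elementary inequality
\[
|f-a|\le|\varphi-e^c|+e^c|e^{i\tilde u}-e^{iv_I}|\le M|\cdot-\xi|^\alpha+e^c|\tilde u-v_I|
\]
on $\mathbb T$. Averaging over $I$ gives $\nu(f,I)\le CM|I|^\alpha+e^c\nu(\tilde u,I)\le C|I|^{\alpha/2}$, where $e^c$ is a bounded quantity, so $C$ depends only on $\alpha$, $M$, and $\|\log|f|\|_{L^1(\mathbb T)}$. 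The principal technical difficulty is the quantitative smoothness estimate for $\tilde w_2$ on $I$: one must exploit \emph{only} the local H\"older hypothesis around $\xi$ together with the global $L^1$ control of $\log|f|$, and balance the two contributions in the kernel-derivative integral. The regime $\alpha\in(1,2)$, where $\tilde w_2$ can be at best Lipschitz regardless of how smooth $u$ is at $\xi$, is exactly what produces the $\alpha/2$ ceiling in the stated bound.
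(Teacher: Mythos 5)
Your argument breaks down at the reduction step where you claim that $|c|=|\log\varphi(\xi)|$ is controlled by $M$ and $\|\log|f|\|_{L^1(\mathbb T)}$, and, more importantly, at the two places where you convert the H\"older hypothesis on $\varphi$ into pointwise bounds on $w=\log\varphi-c$ (namely $\|w_1\|_{L^\infty}\le CM|I|^\alpha$ and $|w_2(s)|\le M|s-\xi|^\alpha$ on the region $|I|\le|s-\xi|\le|I_0|$). The hypothesis controls $|\varphi(t)-\varphi(\xi)|$, not $|\log\varphi(t)-\log\varphi(\xi)|$; passing to logarithms costs a factor $1/\min(\varphi(t),\varphi(\xi))$, which is exactly what is \emph{not} controlled when $\varphi(\xi)$ is small. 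Concretely, take $\varphi(e^{i\theta})=\max(\varepsilon,|\theta|^\alpha)$: it is $\alpha$-H\"older at $\xi=1$ with constant $1$, and $\|\log\varphi\|_{L^1(\mathbb T)}$ stays bounded as $\varepsilon\to0$ (the flat part contributes only about $\varepsilon^{1/\alpha}|\log\varepsilon|$), yet $\varphi(\xi)=\varepsilon$ is arbitrarily small and $w(e^{i\theta})=\alpha\log\bigl(|\theta|/\varepsilon^{1/\alpha}\bigr)$ on $|\theta|\ge\varepsilon^{1/\alpha}$ is of size $|\log\varepsilon|$, not $M|\theta|^\alpha$. So the ``short argument'' bounding $|c|$ is false, and both the $\tilde w_1$ and $\tilde w_2$ estimates collapse in precisely the regime $0<\varphi(\xi)\ll1$, which is the heart of the theorem. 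A further warning sign: as written your proof would give $\nu(f,I)\lesssim|I|^{\alpha}$ for $\alpha<1$ with a constant depending only on $\alpha$, $M$ and $\|\log|f|\|_{L^1}$, which contradicts the known sharpness of the $\alpha/2$ drop (compare Theorem C, whose exponent $\alpha/(2-1/q)$ only tends to $\alpha/2$ as $p\to1$, and the sharpness discussion in Section 4). Your closing diagnosis is therefore also off: the $\alpha/2$ ceiling is not caused by the conjugation operator capping smoothness at Lipschitz for $\alpha\ge1$; it is caused by the possible smallness of $\varphi(\xi)$.

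The correct scheme (this theorem is quoted from Kislyakov--Medvedev--Vasin, but the same pattern is carried out in this paper in Section 2 for the ball) is a two-regime argument in which the $1/\varphi(\xi)$ loss is kept explicit. One proves two estimates of the type of Theorem~\ref{sovsemustal}: first, $\nu(f,I)\lesssim |I|^{\alpha}+\varphi(\xi)$ for all arcs $I\ni\xi$ (choosing the constant $a$ of modulus $\varphi(\xi)$); second, when $\varphi(\xi)>0$ and $|I|\lesssim\varphi(\xi)^{1/\alpha}$, a bound of the form $\nu(f,I)\lesssim |I|^{\alpha}+|I|\,\varphi(\xi)^{-(2/\alpha-1)}$, obtained by writing $f=\varphi e^{i\tilde u}$, using $|\log\varphi-\log\varphi(\xi)|\le|\varphi-\varphi(\xi)|/\min(\varphi,\varphi(\xi))$ near $\xi$ (this is where the $1/\varphi(\xi)$ enters), and using only the $L^1$ bound on $\log\varphi$ on the far region where $\varphi$ may dip below $\varphi(\xi)/2$. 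Comparing the two bounds according to whether $|I|^{\gamma}\gtrless\varphi(\xi)$ and solving $\gamma=1-\gamma(2/\alpha-1)$ gives $\gamma=\alpha/2$, whence $\nu(f,I)\lesssim|I|^{\alpha/2}$. Your cutoff decomposition, the $L^\infty\to\mathrm{BMO}$ bound, the kernel-derivative estimate for the far part, and the final triangle inequality with $a=\varphi(\xi)e^{ic_0}$ are all usable ingredients, but only after the $1/\varphi(\xi)$ factors are inserted and the case analysis in $\varphi(\xi)$ versus $|I|^{\alpha/2}$ is performed; without it the proof is not repairable as stated.
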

Properties of the mean oscillation $\nu$ and its connections with the local and global H\"older and Lipschitz smoothness classes are discussed in the papers~\cite{dev} and~\cite{vkm}.

If one looks at the proofs of the above results in~\cite{hsh} and~\cite{vkm}, it becomes clear that an obstruction for an uncontrollable smoothness drop of $f$ compared to $\phi=|f|$ is in the integrability of $\log \phi$ on $\mathbb T,$ which is true automatically. It has turned out that a stronger condition on $\log \phi$ implies a smaller drop.
\begin{theom}(Kislyakov--Vasin--Medvedev)
\label{kvm}
Let $\alpha\in (0,2).$ Suppose that $f: \mathbb D \rightarrow \mathbb C$ is an analytic function in the Smirnov class without zeros inside $\mathbb D$ which has an $\alpha$-H\"older modulus at a point $\xi\in\mathbb T.$ Suppose that $B_p:=\int_{\mathbb T} |\log |f||^p< \infty$ for some $p>1$. Then for all arcs $I\subseteq \mathbb T$ containing $\xi$ the mean oscillation $\nu(f,I)$ satisfies
$$\nu(f,I)\leq C |I|^{\frac{\alpha}{2-\frac{1}{q}}},$$
where $q$ is the H\"older conjugate of $p$, i.e. $1/p+1/q=1$ and $C$ depends on $B_p, \alpha$ and the H\"older norm of $|f|$ only. 
\end{theom}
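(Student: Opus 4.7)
The plan is to refine the proof of Theorem~\ref{local} by exploiting the stronger integrability hypothesis $\log|f|\in L^p$: wherever the proof of Theorem~\ref{local} used only the automatic $L^1$ bound on $\log\phi$ ($\phi=|f|$), we substitute a H\"older-inequality estimate with exponent $q$. Since $f$ is a zero-free Smirnov function, it is outer, so we write $f = \exp(u+iv)$ with $u = P[\log\phi]$ and $v$ the harmonic conjugate; in particular $\arg f = Q[\log\phi]$ on $\mathbb T$, the classical conjugate function. Set $\phi_0 := \phi(\xi)$, $\beta := \alpha/(2-1/q)$, and fix an arc $I\ni\xi$.

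Split into two regimes according to the size of $\phi_0$. The \emph{small-modulus} case $\phi_0 \leq |I|^\beta$ is disposed of by the local H\"older condition alone: since $\beta<\alpha$, one has $\phi(\zeta)\leq \phi_0 + M|\zeta-\xi|^\alpha \lesssim |I|^\beta$ on $I$, hence $\nu(f,I)\leq 2\|f\|_{L^\infty(I)}\lesssim |I|^\beta$. In the \emph{large-modulus} case $\phi_0 > |I|^\beta$, the key observation is that $\phi \geq \phi_0/2$ on the arc $I_*$ centered at $\xi$ of half-length $\asymp \phi_0^{1/\alpha}$, so $\log\phi$ is $\alpha$-H\"older on $I_*$ with constant $\lesssim M/\phi_0$, and (a calculation with $\beta/\alpha<1$ shows) $I\subset I_*$. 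Choosing $a=f(\xi)$ in $\nu(f,I)$ and using $f=\phi e^{i\arg f}$ gives
\[
\nu(f,I) \lesssim M|I|^\alpha + \phi_0\cdot \operatorname{osc}_I(\arg f).
\]

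To control the oscillation of $\arg f = Q[\log\phi]$, decompose $\log\phi$ via a smooth cutoff at scale $|I_*|$ into a local piece supported in $I_*$ and a tail piece supported in $\mathbb T\setminus I_*$. The local piece is bounded and $\alpha$-H\"older on its support with constant $\lesssim M/\phi_0$, so classical Privalov-type estimates give $\operatorname{osc}_I$ of its conjugate $\lesssim (M/\phi_0)|I|^\alpha$, whence $\phi_0$ times this is $\lesssim M|I|^\alpha \leq |I|^\beta$. The tail piece is estimated via the pointwise difference of the Hilbert kernel and H\"older's inequality with exponents $p,q$:
\[
|Q_\text{tail}(\zeta_1)-Q_\text{tail}(\zeta_2)| \lesssim |\zeta_1-\zeta_2|\!\int_{\mathbb T\setminus I_*}\!\frac{|\log\phi(\eta)|}{|\eta-\xi|^2}\,d\eta \lesssim |\zeta_1-\zeta_2|\cdot B_p^{1/p}\cdot |I_*|^{1/q-2}.
\]
Substituting $|I_*|\asymp \phi_0^{1/\alpha}$ and $|\zeta_1-\zeta_2|\leq|I|$ and multiplying by $\phi_0$ gives $B_p^{1/p}\,|I|\,\phi_0^{1+(1/q-2)/\alpha}$; an exponent calculation using $\beta = \alpha q/(2q-1)$ shows this equals $|I|^\beta$ precisely at the threshold $\phi_0\asymp|I|^\beta$, and monotonicity of the $\phi_0$-power closes the remainder of the regime.

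The decisive step, and the main obstacle, is the tail H\"older estimate above. In the proof of Theorem~\ref{local} only the $L^1$ integrability of $\log\phi$ is used and the corresponding tail is $|I_*|^{-1}$, producing the exponent $\alpha/2$; the $L^p$-hypothesis allows H\"older's inequality with exponent $q$, sharpening the tail to $|I_*|^{1/q-2}$ and yielding $\alpha/(2-1/q)$. Technical care is needed in two secondary steps: replacing $\chi_{I_*}$ by a smooth cutoff at scale $|I_*|$ to avoid endpoint artifacts in the Privalov bound for the local piece, and, for $\alpha$ close to $2$ where the exponent $1+(1/q-2)/\alpha$ changes sign, a finer decomposition that exploits the improved $L^p$ integrability of $\log\phi$ on $\mathbb T\setminus I_*$ rather than the crude $L^1$ tail.
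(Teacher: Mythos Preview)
The paper does not itself prove Theorem~\ref{kvm}; it is quoted as a known result from~\cite{vkm} (global version in~\cite{shir2}). That said, the paper's proofs of its own Theorems~\ref{grglthm1} and~\ref{grglthm2} are explicitly modelled on the one-dimensional argument for Theorem~\ref{kvm}, so one can compare your proposal against that template.

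Your plan matches that template closely: the two-regime split according to the size of $\phi(\xi)$ against a power of $|I|$; the representation $f=\phi\,e^{iG}$ with $G$ the conjugate of $\log\phi$; the near/far decomposition of $G$; and, in the far part, the replacement of the crude $L^1$ bound on $\log\phi$ by H\"older's inequality with exponent $q$ to gain the sharper tail $|I_*|^{1/q-2}$. The balancing computation you sketch, yielding $\beta=\alpha/(2-1/q)$, is exactly the mechanism. One organisational difference: the paper cuts near/far at scale $2Q$ rather than at your larger scale $I_*\asymp\phi_0^{1/\alpha}$, and then further subdivides each dyadic shell outside $2Q$ into the set where $\phi\ge\phi_0/2$ (estimated via the H\"older condition on $\phi$, giving the term $D_1$) and its complement (estimated via the integrability of $\log\phi$, giving $D_2$; these sets are empty for shells inside $I_*$). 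Your choice to cut directly at $I_*$ merges these two subcases and is slightly cleaner in one dimension. For the near part the paper uses the $L^2$-boundedness of the singular integral rather than a Privalov-type estimate; either device works here.
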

\noindent See~\cite{shir2} for the global setting and~\cite{vkm} for the local one.

Note that Theorems~\ref{local} and~\ref{kvm} provide a significant improvement of Theorem~\ref{cjhsh} and moreover these local estimates imply the global ones with the genuine (and not the mean integral) H\"older regularity. To illustrate this, we mention the following fact, whose proof can be found, e.g., in~\cite{dev}: if there is a uniform bound of the mean oscillation of a function $f$ on some interval, then $f$ is H\"older on this interval.

The next step was the passage to higher dimensions. In~\cite{shir}, Shirokov proved that a zero--free function analytic in the unit ball $\mathbb B^n$ of $\mathbb C^n$ and continuous up to the boundary must be $\alpha/2$-H\"older if the modulus of its restriction to $\mathbb B^n$ is $\alpha$-H\"older.

The present paper is devoted to a local version of this result. In the course of the study of this matter, a phenomenon was discovered that was invisible (and cannot occur) in dimension $1$. Specifically, in Theorem~\ref{kvm} above the $1/2$ smoothness drop occurs without any assumptions on the boundedness of $\phi$ far from the point where it is H\"older continuous: it only suffices that $\log \phi$ is integrable and is defined by the formula~\eqref{tratata}.

Outer functions exist also in the ball; they are defined as follows.
\begin{defin}
\label{uneshnyaya}
\textup{A function} $f: \mathbb B^n\rightarrow \mathbb C$ \textup{is called}\textbf{ outer}, \textup{if for all} $z\in \mathbb B^n$ \textup{one has}
$$f(z)=\exp\biggl[\int\limits_{\mathbb S^n}\left(2C(z,\xi)-1\right)\mathrm{Re}(\log f(\xi))d\sigma(\xi)\biggr],$$
\textup{where $\sigma$ denotes the standard rotation-invariant Borel probability measure on} $\mathbb S^n.$
\end{defin}
\begin{rem}
We remind to the reader that the Cauchy kernel $C(z,\xi)$ in the unit ball is defined as $C(z,\xi)=(1-\langle z,\xi\rangle)^{-n}.$
The function $\int_{\mathbb S^n} C(z,\xi)f(\xi)d\sigma(\xi),$ will be sometimes referred to as the ``convolution'' of $f$ with the Cauchy kernel.
\end{rem}

In order to state the main results of the article, we recall three more definitions.
\begin{defin}
\textbf{The nonisotropic quasimetric} \textup{on the $n$-dimensional unit sphere $\mathbb S^n$ is defined as follows: for $u,v\in \mathbb S^n$,}
$d(u,v):=|1-\langle u,v\rangle|$. \textbf{Nonisotropic ball} \textup{is a set of the form} $Q=\{z\in \mathbb S^n: d(z,\xi)\leq r\}$ \textup{with some} $\xi\in \mathbb S^n$ \textup{and} $r\geq 0.$
\end{defin}
In analogy with~\eqref{pampum} we define a multidimensional mean oscillation measuring smoothness.
\begin{defin}
\textup{For a locally summable function $f:\mathbb S^n\rightarrow \mathbb C$ and a ball $Q\subseteq \mathbb S^n$ the} \textbf{ mean oscillation measuring smoothness $\nu(f,Q)$} \textup{is defined as follows}
$$\nu(f,Q):=\inf\limits_{a\in \mathbb C} \frac{1}{|Q|}\int\limits_{Q}\left|f(z)-a\right|d\sigma(z).$$
\end{defin}
The following definition will be very important for us, especially in the proof of Theorem~\ref{grglthm2}.
\begin{defin}
\textup{Let $f$ be an analytic in $\mathbb B^n$ function continuous up to the boundary. Then it satisfies the following inequality}
\begin{equation}
\label{eq-10}
\sup\limits_{\xi\in \mathbb S^n}\int\limits_{\mathbb T}|\log|f(\xi\lambda)||d\lambda=: B_0<\infty,
\end{equation}
\textup{consult~\cite{rud} or~\cite{shir} for the proof. In this case we shall say that $f$ satisfies the} \textbf{``slice'' condition}. 
\end{defin}
\begin{rem}
We fix the following notations once and for all: $\mathbbm{1}:=(1,0,\ldots,0);$ for a nonisotropic ball $Q\subseteq \mathbb S^n$ its radius will be denoted by $l(Q).$
\end{rem}

It turns out that if we impose additionally only a H\"older condition on $\phi$ (say) at the point $\mathbbm{1}$, we do not obtain a $1/2$-drop. Moreover, to say at least something, we should suppose that $\log\phi\in L^p(\mathbb S^n)$ with some $p>1.$ The following result is sharp, as we shall see in the third section of this paper (see Theorem~\ref{grglthm5}).
\begin{theorem}
\label{grglthm1}
Let $\alpha\in (0,1).$ Suppose that $f: \mathbb B^n\rightarrow \mathbb C$ is an outer function such that for all $t\in \mathbb S^n$ one has $|\phi(t)-\phi(\mathbbm{1})|\leq C_0d(t,\mathbbm{1})^{\alpha}$, where $\phi:=|f|.$ Suppose also that $B_p:=\int_{\mathbb S^n} |\log \phi|^p< \infty$ for some $p>1$. Then for all nonisotropic balls $Q\subseteq \mathbb S^n$ containing the point $\mathbbm{1}$ the mean oscillation $\nu(f,Q)$ satisfies
$$\nu(f,Q)\leq C l(Q)^{\frac{\alpha}{n+1-\frac{n}{q}}},$$
where $q$ is the H\"older conjugate of $p$ and $C$ depends on $B_p, C_0, \alpha$ and $n$ only. 
\end{theorem}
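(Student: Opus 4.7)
The plan is to generalize to the complex ball $\mathbb B^n$ the proof of the one-dimensional Theorem~\ref{kvm} given in~\cite{vkm}, with the anisotropic geometry of $\mathbb S^n$ producing the new exponent. Fix a nonisotropic ball $Q\subseteq\mathbb S^n$ of radius $r:=l(Q)$ containing $\mathbbm{1}$, write $c:=\phi(\mathbbm{1})$, and assume (after a limiting argument if needed) that $r$ is small and $c>0$. The goal is to prove $\nu(f,Q)\leq Cr^\beta$ with $\beta:=\alpha/(n+1-n/q)$.

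The backbone of the argument is a pointwise derivative bound on admissible approach regions. Differentiating under the integral sign in the outer-function representation gives $\nabla_z\log f(z)=2\int_{\mathbb S^n}\nabla_z C(z,\xi)\log\phi(\xi)\,d\sigma(\xi)$, and combining the standard estimate $|\nabla_z C(z,\xi)|\leq n|1-\langle z,\xi\rangle|^{-(n+1)}$ with H\"older's inequality in the $(p,q)$ duality and the anisotropic measure estimate
\[
\int_{d(\xi,z_0)\geq\rho}|1-\langle z,\xi\rangle|^{-(n+1)q}\,d\sigma(\xi)\leq C_1\rho^{n-(n+1)q}
\]
yields $|\nabla_z\log f(z)|\leq C_2 B_p^{1/p}(1-|z|)^{-(n+1-n/q)}$. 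The exponent $n+1-n/q$ enters at this step, through the interplay of the order-$(n+1)$ singularity of $\nabla_z C$ and the anisotropic $n$-dimensional volume on $\mathbb S^n$.

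To pass from this interior bound to a boundary oscillation estimate, I would fix an auxiliary scale $R\in[r,1]$, let $\tilde Q$ be the nonisotropic ball of radius $R$ about $\mathbbm{1}$, and split $\log\phi=\log c+h_1+h_2$ with $h_1$ supported in $\tilde Q$ and $h_2$ in the complement. Writing $f=cF_1F_2$ for the outer-type factors, the tangential (interior-level) leg between lifted points $\tilde z=(1-R)z$ and $\tilde w=(1-R)w$ of depth $R$ contributes at most a constant times $rR^{-(n+1-n/q)}$ via the derivative estimate. The near factor $F_1$, whose boundary modulus is $\alpha$-H\"older at $\mathbbm{1}$ with $|F_1(\mathbbm{1})|>0$ (thanks to $c>0$), contributes at most a constant times $r^\alpha$ to the boundary oscillation via a direct Cauchy-kernel estimate on $h_1$ that exploits $h_1(\mathbbm{1})=0$ and so avoids a singular radial integral. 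Summing, $\nu(f,Q)\leq C(r^\alpha+rR^{-(n+1-n/q)})$; choosing $R\in[r,1]$ so that the two terms balance (or $R=1$ if this choice exceeds the interval) and using $\alpha\geq\beta$ to absorb the $r^\alpha$ piece then yields the desired exponent.

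The main obstacle is that the pointwise derivative bound $|\nabla\log f|\leq C(1-|z|)^{-(n+1-n/q)}$ is \emph{not radially integrable}: the integral $\int_0^R\delta^{-(n+1-n/q)}\,d\delta$ diverges for $n+1-n/q\geq 1$, and so the boundary-to-interior oscillation cannot be bounded by a pointwise radial path argument. The local decomposition $f=cF_1F_2$ is what circumvents this: the near factor handles the radial direction via the local H\"older hypothesis, while the far factor's gradient involves only $h_2$, supported at distance $\geq R$, so the kernel estimate is already ``integrable at scale $R$'' and produces only a finite radial contribution. The hypothesis $p>1$ is used precisely to ensure that the anisotropic integral $\int_{d\geq R}d^{-(n+1)q}\,d\sigma$ converges; the sharpness of the resulting exponent will be verified separately via Theorem~\ref{grglthm5}.
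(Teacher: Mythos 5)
Your scheme breaks down at the near factor, and the breakdown is visible from the final display: you claim $\nu(f,Q)\lesssim r^{\alpha}+rR^{-(n+1-\frac nq)}$ with $R\in[r,1]$ free, and balancing gives $R=r^{(1-\alpha)/(n+1-n/q)}\in[r,1]$, hence $\nu(f,Q)\lesssim r^{\alpha}$ --- no smoothness drop at all, which contradicts the sharpness statement of Theorem~\ref{grglthm5}. So at least one of your two contributions must be wrong, and it is the claim that $F_1$ ``contributes at most a constant times $r^{\alpha}$''. Writing $c=\phi(\mathbbm 1)$ and $h_1=(\log\phi-\log c)\chi_{\tilde Q}$, the pointwise H\"older condition at the single point $\mathbbm 1$ controls $h_1$ only on the set where $d(\xi,\mathbbm 1)^{\alpha}\lesssim c/C_0$: beyond that scale $\phi$ may vanish, or be exponentially small, inside $\tilde Q$, so $h_1$ can be enormous (negative) there and the observation $h_1(\mathbbm 1)=0$ gives nothing. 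Any honest bound on the oscillation of $F_1$ (or on $|\log\phi_r-\log c|$, which requires dividing by $\min(\phi_r,c)$) must degrade as $c\to 0$, while the theorem requires constants depending only on $B_p,C_0,\alpha,n$; your proposal never tracks the dependence on $c$, and this dependence is precisely where the exponent $\alpha/(n+1-\frac nq)$ comes from. A secondary but also fatal point: the ``finite radial contribution'' of the far factor is not harmless --- the radial leg of Euclidean length $R$ against the gradient bound $\|h_2\|_{L^p}R^{\frac nq-(n+1)}$ puts a term of size $\|h_2\|_{L^p}R^{-n/p}$ into the exponent of $F_2$, which blows up as $R\to0$ (and $\|h_2\|_{L^p}$ itself contains $|\log c|$).

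For comparison, the paper runs Theorem~\ref{grglthm1} through the same machinery as Theorems~\ref{sovsemustal} and~\ref{grglthm2}: one may assume $c\lesssim 1$, writes $f_r=\phi_r e^{iG}$ on the sphere, and estimates $\frac1{|Q|}\int_Q|f-a|$ with the specific constant $a=c\,e^{ic_0}$, where $c_0$ is the far-field argument integral at $\mathbbm 1$; the key structural feature is that the whole argument-oscillation term carries the factor $c=\phi(\mathbbm 1)$ in front. The near part (over $2Q$) is handled by the $L^2$-boundedness of the Cauchy singular integral together with $|\log\phi_r-\log c|\leq \frac2c|\phi_r-c|$, which is legitimate only in the regime $l(Q)\lesssim c^{1/\alpha}$; the far part is split into dyadic annuli $\Omega_j$ and further into $E_j$ (where the H\"older condition applies) and $F_j$ (where $\phi_r<c/2$), and it is exactly on the $F_j$'s that your hypothesis $\log\phi\in L^p$ is used --- via H\"older's inequality against the kernel bound of Lemma~\ref{yadro}, replacing the slice-condition computation of Theorem~\ref{grglthm2}. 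This produces two regimes, $\nu\lesssim l(Q)^{\alpha}+c$ and $\nu\lesssim l(Q)^{\alpha}+l(Q)\,c^{\,1-(n+1-\frac nq)/\alpha}$, and the exponent of the theorem appears by balancing $l(Q)^{\gamma}$ against $c$, i.e.\ $\gamma=\alpha/(n+1-\frac nq)$. If you want to salvage your factorization-plus-path-integral approach, you must make the cut-off scale depend on $c$ (essentially $R\sim c^{1/\alpha}$), keep the factor $c$ in front of the argument oscillation, and then perform this same balancing; as written, the proposal is missing that central mechanism.
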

\begin{rem}
Note that $\alpha/(n+1-n/q)=\alpha p/(p+n).$
\end{rem}

However, if we suppose that $f$ is continuous up to the boundary in $\mathbb B^n$ and zero--free (recall that then $f(rz)$ is outer), then we regain the $1/2$-drop.
\begin{theorem}
\label{grglthm2}
Let $\alpha\in (0,1).$ Suppose that $f: \mathbb B^n\rightarrow \mathbb C$ is an analytic function without zeros inside $\mathbb B^n$, continuous up to the boundary $n$-dimensional unit sphere $\mathbb S^n$ such that for all $t\in \mathbb S^n$ one has $|\phi(t)-\phi(\mathbbm{1})|\leq C_0d(t,\mathbbm{1})^{\alpha}$. Then for all nonisotropic balls $Q$ containing the point $\mathbbm{1}$ the mean oscillation $\nu(f,Q)$ satisfies
$$\nu(f,Q)\leq C l(Q)^{\frac{\alpha}{2}},$$
where $C$ depends only on $B_0$(see~\eqref{eq-10}), $C_0, \alpha$ and $n$. 
\end{theorem}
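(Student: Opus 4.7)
\medskip
\noindent\textbf{Proof proposal.} The strategy is to reduce Theorem~\ref{grglthm2} to the one-dimensional local estimate (Theorem~\ref{local}) by restricting $f$ to the family of one-complex-dimensional slices through the origin, and then assembling the slice bounds via the natural slice decomposition of the sphere measure. The hypotheses fit this approach well: for each direction $\xi\in\mathbb S^n$ the restriction $g_\xi(\lambda):=f(\lambda\xi)$ is analytic in $\mathbb D$, continuous up to $\mathbb T$, and zero-free, so the dilations $g_\xi(r\lambda)$ are outer on $\mathbb D$ and Theorem~\ref{local} applies after the standard limiting argument; moreover the slice condition~\eqref{eq-10} gives $\|\log|g_\xi|\|_{L^1(\mathbb T)}\le B_0$ uniformly in $\xi$.

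The next step is to identify, on each slice $C_\xi=\{e^{i\theta}\xi:\theta\in[0,2\pi)\}$, a point where $|g_\xi|$ is $\alpha$-H\"older, and then to use Theorem~\ref{local} there. For $\xi$ close to $\mathbbm 1$, the nonisotropic-closest point on $C_\xi$ to $\mathbbm 1$ is $\xi_0:=(\xi_1/|\xi_1|)^{-1}\xi$, at which $d(\xi_0,\mathbbm 1)=1-|\xi_1|$; writing $\zeta=e^{i\theta}\xi$ one finds $d(\zeta,\mathbbm 1)\asymp\max(1-|\xi_1|,|\xi_1|\cdot|\theta-\theta_0|)$, so the H\"older hypothesis on $\phi$ at $\mathbbm 1$ combined with the triangle inequality yields
\[
\bigl||g_\xi(e^{i\theta})|-|g_\xi(e^{i\theta_0})|\bigr|\le 2C_0\,\max\!\bigl(|\theta-\theta_0|,\,(1-|\xi_1|)/|\xi_1|\bigr)^{\alpha},
\]
i.e.\ a genuine $\alpha$-H\"older condition at $\xi_0$ on scales above $(1-|\xi_1|)/|\xi_1|$, with constant $\lesssim C_0|\xi_1|^{\alpha}$. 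Theorem~\ref{local} then gives $\nu(g_\xi,J)\le C_1|J|^{\alpha/2}$ for every arc $J\ni\xi_0$ at those scales, with $C_1=C_1(B_0,C_0,\alpha)$ uniform in $\xi$.

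The passage from these 1D slice bounds to the $n$-dimensional estimate would use the slice integration formula $\int_{\mathbb S^n}F\,d\sigma=\int_{\mathbb{CP}^{n-1}}\int_{\mathbb T}F(\lambda\xi)\,d\lambda\,d\mu(\xi)$. For the nonisotropic ball $Q$ of radius $r=l(Q)$ at $\mathbbm 1$ only slices with $1-|\xi_1|\lesssim r$ contribute, the arc $C_\xi\cap Q$ has length $\lesssim\min(r/|\xi_1|,|\xi_1|)$, and the transverse measure $d\mu(\xi)$ near $\mathbbm 1$ is comparable to the Euclidean measure on $\mathbb{CP}^{n-1}$. Choosing the constant $a=f(\mathbbm 1)$ in the definition of $\nu(f,Q)$ and applying Fubini, the resulting integral factorizes into a slice mean-oscillation estimate times a transverse integral that can be evaluated explicitly; the exponent $\alpha/2$ emerges once all factors are balanced.

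\medskip
\noindent\textbf{Main obstacle.} The delicate point is precisely this geometric bookkeeping across the family of slices: the effective H\"older constant $\sim C_0|\xi_1|^{\alpha}$, the slice arc length $\sim\min(r/|\xi_1|,|\xi_1|)$, the transverse measure, and the ``degenerate'' scales below $(1-|\xi_1|)/|\xi_1|$ on which the H\"older estimate is to be replaced by the uniform bound $C_0(1-|\xi_1|)^{\alpha}$, all have to be tracked simultaneously so as to produce exactly $l(Q)^{\alpha/2}$ rather than a weaker exponent. A secondary technicality is to justify rigorously the limiting argument that transfers Theorem~\ref{local}, which is stated for outer functions in $\mathbb D$, to the slices of the non-outer but zero-free $f$ on $\mathbb B^n$; this is where the observation that $g_\xi(r\,\cdot\,)$ is outer, combined with uniform control from the slice condition~\eqref{eq-10}, does the work.
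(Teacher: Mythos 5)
Your plan to deduce the theorem from the one-dimensional result on the circles $C_\xi=\{e^{i\theta}\xi\}$ has a gap at the assembly step which I do not see how to close. The circles $C_\xi$ are pairwise disjoint on $\mathbb S^n$, and each slice bound $\nu(g_\xi,J)\le C_1|J|^{\alpha/2}$ is an infimum over a constant $a_\xi$ that may vary with the slice. To run Fubini with the single constant $a=f(\mathbbm{1})$ you need, in addition, that $|a_\xi-f(\mathbbm{1})|\lesssim l(Q)^{\alpha/2}$ for all contributing slices, i.e.\ a bound on the \emph{transverse} variation of the phase of $f$ from slice to slice. Nothing you use — pointwise H\"older control of $|f|$ at $\mathbbm{1}$, one-variable analyticity of each $g_\xi$, and the slice condition~\eqref{eq-10} — couples the phases of different slices, so $f$ could a priori be nearly constant on each arc $C_\xi\cap Q$ with wildly different constants across slices, making every slice oscillation tiny while $\nu(f,Q)$ stays large. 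This transverse coupling is exactly what the paper extracts by staying genuinely $n$-dimensional: it writes $f_r=\phi_r e^{iG}$ with $G$ the imaginary part of the $n$-dimensional Cauchy integral of $\log\phi_r$, picks $a=\phi(\mathbbm{1})e^{ic_0}$ with $c_0$ the far-field value of that integral at $\mathbbm{1}$, and runs a local/far (Calder\'on--Zygmund type) decomposition ($C_1$, $D_1$, $D_2$ in Theorem~\ref{sovsemustal}); the slice condition enters only once, to prove $\int_{Q_j}|\log|f_r||\,d\sigma\lesssim(2^jl(Q))^{n-1}$.

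A second, related problem is that Theorem~\ref{local} cannot be cited as a black box on a slice that misses $\mathbbm{1}$. What the hypothesis gives at $\xi_0$ is $\bigl||g_\xi(e^{i\theta})|-|g_\xi(e^{i\theta_0})|\bigr|\lesssim C_0\bigl((1-|\xi_1|)^\alpha+|\theta-\theta_0|^\alpha\bigr)$, i.e.\ a H\"older bound only above the scale $1-|\xi_1|$ together with an additive offset $C_0(1-|\xi_1|)^\alpha$ below it; this is not a H\"older modulus at $\xi_0$ in the sense required by Theorem~\ref{local}. Dismissing this as bookkeeping hides the actual crux: the hard regime of Theorem~\ref{grglthm2} is $\phi(\mathbbm{1})$ small but comparable to $l(Q)^{\alpha/2}$, and there an additive modulus perturbation of size $(1-|\xi_1|)^\alpha$ produces a phase error of order $(1-|\xi_1|)^\alpha/\phi(\mathbbm{1})$, which has to be balanced against the trivial estimate $\nu(f,Q)\lesssim l(Q)^\alpha+\phi(\mathbbm{1})$ — precisely the two-regime balancing (threshold $l(Q)^{\alpha/2}\sim\phi(\mathbbm{1})$, exponent $\gamma=\alpha/2$) carried out in the paper after Theorem~\ref{sovsemustal}. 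So even on a single slice you would have to reprove a quantitative perturbed version of the one-dimensional theorem rather than invoke it, and even then the transverse gap above would remain.
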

\begin{rem}
If an estimate of the type $\nu(f,Q)\leq Cl(Q)^{\beta}$ holds for all balls $Q$ containing some point $\xi\in \mathbb S^n$ with some $\beta>0$ and $C$ independent of $Q$ we shall sometimes say that $f$ is $\beta$-H\"older ``in average'' at $\xi$.  
\end{rem}

While proving Theorems~\ref{grglthm1} and~\ref{grglthm2} the author has been inspired by an approach developed by Kislyakov and coauthors (see~\cite{vkm}). Indeed, the proofs of Theorems~\ref{grglthm1} and~\ref{grglthm2} resemble the $1$-dimensional pattern of Theorem~\ref{kvm}, but only up to a certain point. As it was in dimension $1$, an obstruction for an uncontrollable smoothness drop is the integrability of (some power of) $\log \phi$. The difference is that in Theorem~\ref{grglthm1} this integrability is against the surface measure on $\mathbb S^n$ and in Theorem~\ref{grglthm2} we have the same on every one-dimensional slice. The latter feature leads to new calculations at the core of the proof of Theorem~\ref{grglthm2}. On top of that, in Theorem~\ref{grglthm2} we are dealing with zero--free analytic (and not outer) functions, which makes, as we shall see, the proof of this theorem more complicated than the proofs of Theorems~\ref{grglthm1} and~\ref{kvm}.

As it seems to the author, it is plausible that there are versions of Theorems~\ref{grglthm1} and~\ref{grglthm2} that hold true in a more general setting, namely in the context of the holomorphic functions defined on more general domains in $\mathbb C^n$. The author does not know whether the theorems proved here hold if one considers $\alpha$ strictly bigger than one in those. Neither does he know if the strong $\alpha$-H\"older condition can be substituted with a weaker ``average'' one. The author plans to prove these generalizations in the nearest future.
\begin{ack}
The author is kindly grateful to his scientific adviser academician Sergei V. Kislyakov for having posed the problem, for a number of helpful suggestions and for help in preparation of this article.
\end{ack}

\section{Proof of Theorem~\ref{grglthm2}}
We are acting in the following way. We start with a technical result, which we, nevertheless, call a Theorem by the reason of some nontrivial (at least in our opinion) estimates included in its proof. With help of this theorem we shall later obtain the desired bound on the mean oscillation $\nu(f,Q).$
\begin{theorem}
\label{sovsemustal}
The following estimates hold under the conditions of Theorem~\ref{grglthm2}.
\begin{enumerate}
\item For all nonisotropic balls $Q\subseteq \mathbb S^n$ containing the point $\mathbbm{1}$
\begin{equation*}
\nu(f,Q)\leq C l(Q)^{\alpha}+ \phi(\mathbbm{1}).
\end{equation*}
\item If $\phi(\mathbbm{1})>0,$ then for all nonisotropic balls $Q\subseteq \mathbb S^n$ containing the point $\mathbbm{1}$ and such that $l(Q)\leq \left(\phi(\mathbbm{1})/2C_0\right)^{1/\alpha}$ holds
\begin{equation*}
\nu(f,Q)\leq C l(Q)^{\alpha}+C\frac{l(Q)^2}{\phi(\mathbbm{1})^{\frac{2}{\alpha}-1}},
\end{equation*}
\end{enumerate}
where the constant $C$ depends only on $n, C_0$ and $B_0.$
\end{theorem}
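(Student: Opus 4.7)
\textbf{Proof proposal for Theorem~\ref{sovsemustal}.}

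For Part~(1), I would take $a = 0$ in the infimum defining $\nu(f, Q)$, giving $\nu(f, Q) \leq \frac{1}{|Q|}\int_Q |f(z)|\,d\sigma(z) = \frac{1}{|Q|}\int_Q \phi(z)\,d\sigma(z)$. The H\"older hypothesis produces $\phi(z) \leq \phi(\mathbbm{1}) + C_0 d(z, \mathbbm{1})^\alpha \leq \phi(\mathbbm{1}) + C_0 l(Q)^\alpha$ on $Q$, which is the claim.

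For Part~(2) I would follow the strategy of~\cite{vkm} adapted to the ball. The first step is to use the fact noted in the introduction that $f_r(z) := f(rz)$ is outer for each $r < 1$, and pass to the limit $r \to 1$ (justified by the slice condition~\eqref{eq-10}) to obtain the representation $f(z) = \tilde{c}\exp(h(z))$ with $|\tilde c| = 1$ and
$$h(z) := \int_{\mathbb{S}^n}(2 C(z, \xi) - 1)\log \phi(\xi)\,d\sigma(\xi).$$
The second step is to choose as competitor $a = f(z^*)$ with $z^* := (1 - l(Q))\mathbbm{1}$ and apply the elementary inequality $|e^u - e^v| \leq |u - v| \max(e^{\mathrm{Re}\, u}, e^{\mathrm{Re}\, v})$. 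Since $e^{\mathrm{Re}\, h(w)} = |f(w)|$, since $|f(z^*)|$ is bounded by a constant multiple of $\phi(\mathbbm{1})$ (the pluri-harmonic extension of $\log \phi$ at $z^*$ concentrates near $\mathbbm{1}$, where $\phi$ is close to $\phi(\mathbbm{1})$; the contribution of far $\xi$ is absorbed by the slice condition), and since $|f(z)| = \phi(z) \leq (3/2)\phi(\mathbbm{1})$ on $Q$ by the hypothesis on $l(Q)$, the problem reduces to proving
$$|h(z) - h(z^*)| \leq C\,\frac{l(Q)^\alpha}{\phi(\mathbbm{1})} + C\,\frac{l(Q)^2}{\phi(\mathbbm{1})^{2/\alpha}}, \qquad z \in Q.$$

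For this key estimate I would use the identity $\int_{\mathbb{S}^n}(C(z, \xi) - C(z^*, \xi))\,d\sigma(\xi) = 0$ to subtract off $\log \phi(\mathbbm{1})$:
$$h(z) - h(z^*) = 2\int_{\mathbb{S}^n}[C(z, \xi) - C(z^*, \xi)][\log \phi(\xi) - \log \phi(\mathbbm{1})]\,d\sigma(\xi),$$
and split the integral at the radius $\rho := (\phi(\mathbbm{1})/(2C_0))^{1/\alpha}$. On the near piece $Q^\sharp = \{d(\xi, \mathbbm{1}) \leq \rho\}$ one has $\phi \geq \phi(\mathbbm{1})/2$ so $|\log \phi - \log \phi(\mathbbm{1})| \leq (2 C_0/\phi(\mathbbm{1}))\,d(\xi, \mathbbm{1})^\alpha$; combining this with a first-order pointwise bound on $|C(z, \xi) - C(z^*, \xi)|$ and integrating produces the first summand. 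On the complement $(Q^\sharp)^c$ the kernel is smooth in $z$; a dyadic decomposition into nonisotropic annuli of radius $2^k \rho$ around $\mathbbm{1}$, combined with a higher-order kernel bound (responsible for the factor $l(Q)^2$) and the slice condition~\eqref{eq-10} bounding $\log \phi$ annulus by annulus, produces the second summand.

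The principal obstacle is the far part of the last step. The slice condition~\eqref{eq-10} is anisotropic, controlling $\log \phi$ only along one-dimensional complex lines through the origin rather than in $L^1(\mathbb{S}^n)$ outright; a naive first-order estimate on the kernel difference would yield the weaker power $\phi(\mathbbm{1})^{-(n+1)/\alpha}$. To sharpen it to $\phi(\mathbbm{1})^{-2/\alpha}$, one must use a second-order Taylor expansion of $C(\cdot, \xi)$ around a suitable base point, exploit the cancellation inherent in the identity $\int_{\mathbb{S}^n}(C(z,\xi) - C(z^*,\xi))\,d\sigma(\xi) = 0$, and weave these together with the slice-wise integrability of $\log \phi$ on each dyadic annulus. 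This careful multidimensional bookkeeping is the essentially new calculation advertised before the theorem.
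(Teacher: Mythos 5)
Your Part (1) is fine (taking $a=0$ and using the one-point H\"older bound is even simpler than the paper's argument), but Part (2) has genuine gaps. The first is structural: passing to the limit $r\to 1$ to write $f=\tilde c\,e^{h}$ with $h(z)=\int_{\mathbb S^n}(2C(z,\xi)-1)\log\phi(\xi)\,d\sigma(\xi)$ is exactly the assertion that $f$ is outer, which is the hypothesis of Theorem~\ref{grglthm1}, not of Theorem~\ref{grglthm2} (there $f$ is only zero-free and continuous up to $\mathbb S^n$). The slice condition~\eqref{eq-10} does not justify this limit: $\lim_{r\to1}\int(2C(z,\xi)-1)\log\phi_r(\xi)\,d\sigma(\xi)$ may differ from $\int(2C(z,\xi)-1)\log\phi(\xi)\,d\sigma(\xi)$ by a nontrivial singular contribution, which is precisely the possible ``singular factor'' of $f$. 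For the same reason your identity $e^{\mathrm{Re}\,h(z)}=|f(z)|$ on the sphere is unjustified. The correct (and repairable) route is the paper's: keep $r<1$, work with $f_r=\phi_r e^{iG}$, and pass to the limit only in the final numerical estimates using uniform continuity of $f$ up to the boundary; note also that your bound $|f(z^*)|\lesssim\phi(\mathbbm 1)$ comes from the one-point H\"older upper bound $\phi(\xi)\le\phi(\mathbbm 1)+C_0 d(\xi,\mathbbm 1)^{\alpha}$ integrated against the positive invariant Poisson kernel, not from the slice condition.

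The second and more serious gap is that neither half of your key estimate is actually established. On the near piece $\{d(\xi,\mathbbm 1)\le\rho\}$, a pointwise bound on $|C(z,\xi)-C(z^*,\xi)|$ followed by absolute-value integration fails: for $z\in Q$ with $z\ne\mathbbm 1$ the weight $d(\xi,\mathbbm 1)^{\alpha}$ stays of size $\approx l(Q)^{\alpha}$ as $\xi\to z$, so $\int_{d(\xi,z)\le l(Q)}|C(z,\xi)|\,d(\xi,\mathbbm 1)^{\alpha}\,d\sigma(\xi)$ diverges logarithmically; one must use the cancellation of the Cauchy singular integral, e.g.\ its $L^2$-boundedness applied to $\chi_{2Q}\,(\log\phi_r-\log\phi(\mathbbm 1))$, which is how the paper handles this term. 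On the far piece, the heart of the matter is converting the one-dimensional condition~\eqref{eq-10} into the surface-measure bound $\int_{\{d(\xi,\mathbbm 1)\le t\}}|\log\phi_r|\,d\sigma\lesssim t^{\,n-1}$ on each ball around $\mathbbm 1$; you only allude to this, and it is precisely the new computation of the paper (integration of the slice bound in $\xi$ plus the rotation change of variables), combined with splitting each annulus into the sets where $\phi_r\ge\phi(\mathbbm 1)/2$ (H\"older bound) and where $\phi_r<\phi(\mathbbm 1)/2$ (empty below scale $\rho$). Finally, your announced mechanism for the factor $l(Q)^2$ --- a second-order Taylor expansion of the kernel together with the mean-zero identity --- does not work: that identity is already spent subtracting $\log\phi(\mathbbm 1)$, the difference $|\widetilde C(z,\xi)-\widetilde C(\mathbbm 1,\xi)|$ for $z\in Q$ is genuinely of first order $l(Q)/d(\xi,\mathbbm 1)^{n+1}$, and averaging over $z\in Q$ yields no further cancellation. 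Indeed the paper's own estimate of this term gives $\phi(\mathbbm 1)\,l(Q)\,(2^k l(Q))^{-2}\approx l(Q)/\phi(\mathbbm 1)^{2/\alpha-1}$, i.e.\ the first power of $l(Q)$, and it is this first-power bound that is used to deduce Theorem~\ref{grglthm2}; so the stronger second-order claim you aim at is both unproved and unnecessary.
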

\begin{rem}
From now on the sign $\lesssim$ indicates that the left-hand part of an inequality is less than the right-hand part multiplied by a constant a $C$ depending only on $n, C_0$ and $B_0.$
\end{rem}
\begin{proof}
We can suppose (in the both theorems) that $\phi(\mathbbm{1})\leq 1$. We argue by contradiction. Indeed, assume that $\phi(\mathbbm{1})>1$. Since the function $\phi$ is H\"older at the point $\mathbbm{1}$, we have $\phi(\mathbbm{1})\leq 2\max(\phi(\xi),C_0d(\xi,\mathbbm{1})^{\alpha})$ for all $\xi \in \mathbb S^n$. As a consequence, we infer the inequality
\begin{equation*}
0\leq\log\phi(\mathbbm{1})\leq\log 2 + |\log \phi(\xi)|+|\log C_0|+\alpha |\log d(\xi,\mathbbm{1})|.
\end{equation*}
Integration of the last line yields the inequality $|\log\phi(\mathbbm{1})|\lesssim |\log C_0|+B_0+\kappa,$ with a constant $\kappa$ depending on $\alpha$ and $n$ only. From here we readily deduce that $\phi(\mathbbm{1})\lesssim \exp (|\log C_0|+B_0+\kappa)$. That is why we can consider from the very beginning the function $\widetilde{f}(z)=f(z)/\phi(\mathbbm{1})$ instead of $f$. Indeed, this makes sense since the function $\widetilde{f}$ (which is obviously continuous up to $\mathbb S^n$) is zero-free, satisfies the  ``slice'' condition~\eqref{eq-10} and moreover the corresponding value of the supremum there is controlled by the constants $B_0$ and $C_0$. We shall further write $f$ instead of $\widetilde{f}$.

With no loss of generality, we suppose that $f(0)$ is a real number (because the general case follows from the observation that the function $g(z)=f(z)\cdot\overline{f(0)}/|f(0)|$ satisfies $g(0)\in \mathbb R$). Since $f$ is an analytic function without zeros, we are allowed to write the following representation for the functions $f_r(\xi):=f(r\xi), r<1$:
\begin{multline*}
f_r(z)=\exp\biggl[\int\limits_{\mathbb S^n}\left(2C(z,\xi)-1\right)\text{Re}(\log f_r(\xi))d\sigma(\xi)\biggr]=\\
\exp\biggl[\int\limits_{\mathbb S^n}\left(2C(z,\xi)-1\right)\log |f_r(\xi)|d\sigma(\xi)\biggr],
\end{multline*}
where $C$ is the Cauchy kernel for the unit sphere in the $n$-dimensional complex space, contact~\cite{rud} for the proof. Hence
\begin{multline*}
f_r(z)=\exp\biggl[\int\limits_{\mathbb S^n}\left(\text{Re}(2C(z,\xi)-1)+i\text{Im}(2C(z,\xi)-1)\right)\log|f_r(\xi)|d\sigma(\xi)\biggr]= \\
\phi_r(z)\exp\biggl[i\int\limits_{\mathbb S^n}\text{Im}\left(2C(z,\xi)-1\right)\log |\phi_r(\xi)|d\sigma(\xi)\biggr]=:\phi_r(z)e^{iG(z)},
\end{multline*}
where we write $\phi_r(\xi):=\phi(r\xi)=|f(r\xi)|$ for sake of brevity. Next, we estimate $\nu(f,Q)$ for some fixed nonisotropic ball $Q$ such that $\mathbbm{1}\in Q\subseteq\mathbb S^n.$ First, we choose $a:=\phi(\mathbbm{1})e^{ic_0}$ for some positive constant $c_0.$ Note that since $f$ is continuous at any point $\xi$ of the boundary sphere, we infer that for any $\varepsilon>0$ there exists $\delta>0$ such that if $1-\delta<r<1$, then $|\phi_r(\xi)-\phi(\xi)|\leq \varepsilon$ and $|f_r(\xi)-f(\xi)|\leq \varepsilon$. From now on we consider only these $r$'s. From here we see that 
\begin{multline*}
\nu(f,Q)\leq \\
\frac{1}{|Q|}\int\limits_{Q}|f(z)-\phi(\mathbbm{1})e^{ic_0}|\leq\frac{1}{|Q|}\int\limits_{Q}|f(z)-f_r(z)|+ \frac{1}{|Q|}\int\limits_{Q}|\phi_r(z)e^{iG(z)}-\phi(\mathbbm{1})e^{ic_0}| \leq\\
 \varepsilon+ \frac{1}{|Q|}\int\limits_{Q}|(\phi_r(z)-\phi(\mathbbm 1))e^{iG(z)}|+ \frac{1}{|Q|}\int\limits_{Q}\phi(\mathbbm 1)|e^{iG(z)}-e^{ic_0}| \leq \varepsilon + C l(Q)^{\alpha}+2\phi(\mathbbm 1),
\end{multline*}
so the first claim of Theorem~\ref{sovsemustal} follows.

In order to prove the second part of Theorem~\ref{sovsemustal}, we choose $$c_0:=\int\limits_{\mathbb S^n \backslash 2Q} \text{Im}\left(2C(\mathbbm{1},\xi)-1\right)\cdot\left(\log\phi_r(\xi)-\log\phi(\mathbbm{1})\right) d\sigma(\xi).$$ Hence, thanks to the fact that the integral of the function $\log\phi(\mathbbm{1})\cdot\left(2C(\mathbbm{1},\xi)-1\right)$ over the unit sphere equals zero, we infer the inequality
\begin{equation*}
\nu(f,Q)\leq\frac{1}{|Q|}\int\limits_{Q}|\phi-\phi(\mathbbm{1})| + A+\varepsilon,
\end{equation*}
where $A$ by definition is equal to the following sum of integrals 
\begin{multline*}
C_1+D:= \frac{\phi(\mathbbm{1})}{|Q|}\biggl[\int\limits_{Q}\int\limits_{\mathbb S^n}\chi_{2Q}\cdot\text{Im}\left(2C(z,\xi)-1\right)\cdot\left(\log\phi_r(\xi)-\log\phi(\mathbbm{1})\right) d\sigma(\xi)d\sigma(z)\biggr. +\\
\biggl.\int\limits_{Q}\int\limits_{\mathbb S^n\backslash 2Q}\left(\text{Im}\left(2C(z,\xi)-1\right)-\text{Im}\left(2C(\mathbbm{1},\xi)-1\right)\right)\cdot\bigl(\log\phi_r(\xi)-\log\phi(\mathbbm{1})\bigr) d\sigma(\xi)d\sigma(z)\biggr].
\end{multline*}

We shall first estimate the integral $C_1.$ Note that for all $\xi\in Q$ and for all $\varepsilon$ small enough,
\begin{equation}
\label{strekozzza}
\phi_r(\xi)-\phi(\mathbbm 1)\leq C_0|\xi-\mathbbm 1|^{\alpha}+\varepsilon\leq C_0 l(Q)^{\alpha}+\varepsilon\leq \phi(\mathbbm 1),
\end{equation}
where the last inequality here follows from the conditions imposed on $Q$. Hence $\phi_r(\xi)\leq 2\phi(\mathbbm 1)$. Referring to this and to the trivial inequality $|\log\mu-\log\eta|\leq |\mu-\eta|/\min(\mu,\eta),$ which is valid for all $\mu,\eta>0,$ we infer the estimate
\begin{equation*}
|\log\phi_r(\xi)-\log\phi(\mathbbm{1})|\leq \frac{2}{\phi(\mathbbm{1})} |\phi_r(\xi)-\phi(\mathbbm{1})|.
\end{equation*}
This fact along with the $L^2$-boundedness of the singular integral represented by the convolution with the Cauchy kernel yields the usual trivial bound for $C_1:$
\begin{multline*}
C_1^2\leq \frac{ \phi(\mathbbm{1})^2}{|Q|}\int\limits_{Q}\Big(\int\limits_{\mathbb S^n}\chi_{2Q}\cdot\text{Im}\left(2C(z,\xi)-1\right)\cdot\left(\log\phi_r(\xi)-\log\phi(\mathbbm{1})\right) d\sigma(\xi)\Big)^2d\sigma(z)\lesssim\\
 \frac{\phi(\mathbbm{1})^2}{|Q|}\int\limits_{Q} |\log\phi_r(\xi)-\log\phi(\mathbbm{1})|^2 d\sigma(\xi)\lesssim\frac{1}{|Q|}\int\limits_{Q} |\phi_r(\xi)-\phi(\mathbbm{1})|^2 d\sigma(\xi)\lesssim l(Q)^{2\alpha}+\varepsilon,
\end{multline*} 
where the last inequality follows from the fact that the function $\phi$ is $\alpha$-H\"older at the point $\mathbbm 1$ and from the conditions imposed on $r$. 

Next, we estimate the term $D$. We introduce the following decomposition of the unit $n$-sphere: $\mathbb S^n= \bigcup_{j=1}^{m} \Omega_j,$ where $\Omega_j:=\{z\in\mathbb S^n: d(z,\mathbbm{1})\in(2^j l(Q),2^{j+1}l(Q))\}$ (here $m$ is the smallest natural number, such that $2^{m+1}Q\supseteq S$). We further use this decomposition in the estimate of the term $D$:
\begin{equation*}
D\lesssim\frac{\phi(\mathbbm{1})}{|Q|}\int\limits_{Q}\sum\limits_{j=1}^m\int\limits_{\Omega_j}|\log\phi_r(\xi)-\log\phi(\mathbbm{1})|\cdot|\widetilde{C}(z,\xi)-\widetilde{C}(\mathbbm{1},\xi)|d\sigma(\xi)d\sigma(z),
\end{equation*}
where $\widetilde{C}(z,\xi)$
is the imaginary part of the Cauchy kernel. Next, we decompose each of the sets $\Omega_j$ into two as follows: $E_j:=\{\xi\in\Omega_j: \phi_r(\xi)\geq\phi(\mathbbm{1})/2\}$ and $F_j:=\Omega_j\backslash E_j.$ For each $j$ between $1$ and $m$ the following estimate holds on $E_j$:
\begin{equation*}
|\log\phi_r(\xi)-\log\phi(\mathbbm{1})|\leq\frac{2}{\phi(\mathbbm{1})}|\phi_r(\xi)-\phi(\mathbbm{1})|\leq\varepsilon+\frac{C_0}{\phi(\mathbbm{1})}\left(2^{j}l(Q)\right)^{\alpha}.
\end{equation*}
On the other hand, since $\phi(\mathbbm 1)\leq 1$ we readily get for all $\xi\in F_j$ the following chain of inequalities
\begin{equation*}
|\log\phi_r(\xi)-\log\phi(\mathbbm{1})|=-\log\phi_r(\xi)+\log\phi(\mathbbm{1}) \leq \log\frac{1}{\phi_r(\xi)}.
\end{equation*}
Note that $F_j=\emptyset$ once $j\leq k,$ where $1\leq k\leq m$ is the biggest natural number such that $2^k l(Q)\leq \left(\phi(\mathbbm{1})/2C_0\right)^{1/\alpha}$. Indeed, this can be proved in a way similar to the proof of the inequality~\eqref{strekozzza}. From here we deduce that
\begin{multline}
\label{eq-4}
D\lesssim\frac{\phi(\mathbbm{1})}{|Q|}\int\limits_{Q}\biggl[\sum\limits_{j=1}^m\frac{\left(2^{j}l(Q)\right)^{\alpha}}{\phi(\mathbbm{1})}\int\limits_{E_j}|\widetilde{C}(z,\xi)-\widetilde{C}(\mathbbm{1},\xi)|d\sigma(\xi)+\varepsilon+\biggr.\\
\biggl.\sum\limits_{j=k+1}^m \int\limits_{F_j}\log\frac{1}{\phi_r(\xi)}|\widetilde{C}(z,\xi)-\widetilde{C}(\mathbbm{1},\xi)|d\sigma(\xi) \biggr]d\sigma(z)=: D_1+\varepsilon +D_2.
\end{multline}
We estimate $D_1$ and $D_2$ separately. But before that, we recall one easy lemma whose proof is left to the reader as an exercise. In this lemma we state a usual bound for (the imaginary part of) the Cauchy kernel in the unit ball.
\begin{lem}
\label{yadro}
Let $m$ and $Q$ be as above and let $1\leq j\leq m$. Suppose that $z\in Q$ and $\xi\in \Omega_j$. We have the following inequality
$$|\widetilde{C}(z,\xi)-\widetilde{C}(\mathbbm{1},\xi)|\lesssim \frac{l(Q)}{(2^jl(Q))^{n+1}}.$$
\end{lem}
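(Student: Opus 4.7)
The plan is to apply the mean value theorem along the straight segment from $\mathbbm{1}$ to $z$ in $\mathbb{C}^n$. Parametrize $\gamma(t) = \mathbbm{1} + t(z-\mathbbm{1})$ for $t\in[0,1]$. Since $C(\cdot,\xi)$ is holomorphic in its first argument, the fundamental theorem of calculus gives
$$C(z,\xi) - C(\mathbbm{1},\xi) = n\langle z-\mathbbm{1},\xi\rangle\int_0^1\frac{dt}{(1-\langle\gamma(t),\xi\rangle)^{n+1}},$$
and taking imaginary parts yields $\widetilde{C}(z,\xi) - \widetilde{C}(\mathbbm{1},\xi)$. The estimate then reduces to controlling (i) the denominator $|1-\langle\gamma(t),\xi\rangle|$ from below, and (ii) the numerator $\langle z-\mathbbm{1},\xi\rangle$ from above, in a way tailored to the imaginary part.

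For the denominator, observe that $|1-\overline{\xi_1}| = d(\mathbbm{1},\xi)\geq 2^jl(Q)$ since $\xi\in\Omega_j$, while a Cauchy--Schwarz estimate combined with the hypothesis $j\geq 1$ gives $|\langle z-\mathbbm{1},\xi\rangle|\leq |1-\overline{\xi_1}|/2$ (proved in the same spirit as inequality~\eqref{strekozzza}). Hence, by the triangle inequality, $|1-\langle\gamma(t),\xi\rangle|\geq |1-\overline{\xi_1}|-t|\langle z-\mathbbm{1},\xi\rangle|\gtrsim 2^jl(Q)$ uniformly for $t\in[0,1]$. For the numerator, I would split $\langle z-\mathbbm{1},\xi\rangle = (z_1-1)\overline{\xi_1} + \sum_{k=2}^n z_k\overline{\xi_k}$. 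The first summand is immediately controlled by $|z_1-1|=d(z,\mathbbm{1})\leq l(Q)$, which is the desired ``radial'' order.

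The remaining ``tangential'' piece $\sum_{k\geq 2}z_k\overline{\xi_k}$ has modulus only $\lesssim 2^{j/2}l(Q)$ (by Cauchy--Schwarz, using that $z\in Q$ forces $|z'|\lesssim l(Q)^{1/2}$ while $\xi\in\Omega_j$ forces $|\xi'|\lesssim (2^jl(Q))^{1/2}$), so a naive modulus bound for this summand falls short of the target by a factor of $2^{j/2}$. The main obstacle is therefore to exploit the fact that it is the \emph{imaginary} part of $\widetilde{C}$ that is being estimated. The strategy is to expand $(1-\langle\gamma(t),\xi\rangle)^{-n-1} = (1-\overline{\xi_1})^{-n-1}(1+O(2^{-j/2}))$ around the constant $(1-\overline{\xi_1})^{-n-1}$ and to show that the leading tangential contribution $\text{Im}\bigl[\sum_{k\geq 2}z_k\overline{\xi_k}/(1-\overline{\xi_1})^{n+1}\bigr]$ is cancelled to the required order by the subleading terms of the expansion, the surviving quadratic-in-$2^{-j/2}$ pieces producing precisely $l(Q)/(2^jl(Q))^{n+1}$. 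Combining this with the denominator estimate of the previous paragraph then yields the stated pointwise bound $|\widetilde{C}(z,\xi)-\widetilde{C}(\mathbbm{1},\xi)|\lesssim l(Q)/(2^jl(Q))^{n+1}$.
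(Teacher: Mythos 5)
Your reduction via the fundamental theorem of calculus and the lower bound $|1-\langle\gamma(t),\xi\rangle|\gtrsim 2^jl(Q)$ are both reasonable (the latter works verbatim only once $j$ exceeds a fixed constant depending on the quasi-triangle constant of $d$; the finitely many remaining annuli would need a direct check, but that is routine). You also correctly isolate the crux: Cauchy--Schwarz only gives $|\sum_{k\ge2}z_k\overline{\xi_k}|\lesssim 2^{j/2}l(Q)$, which overshoots the target by a factor $2^{j/2}$.

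The gap is in your final paragraph, where you assert but never verify that taking imaginary parts cancels this leading tangential contribution. It does not. Take $n=2$ and
\[
\xi=\bigl(1-\delta,\ \sqrt{2\delta-\delta^2}\bigr),\qquad z=\bigl(1-\epsilon,\ \sqrt{2\epsilon-\epsilon^2}\,e^{i\psi}\bigr),
\]
with $\epsilon\approx l(Q)$ and $\delta\approx 2^jl(Q)$. Here $1-\overline{\xi_1}=\delta$ is real, so $\widetilde C(\mathbbm{1},\xi)=0$, while
\[
1-\langle z,\xi\rangle=(\epsilon+\delta-\epsilon\delta)-\sqrt{(2\epsilon-\epsilon^2)(2\delta-\delta^2)}\,e^{i\psi}
=\delta\Bigl(1-2\sqrt{\tfrac{\epsilon}{\delta}}\,e^{i\psi}+O\bigl(\tfrac{\epsilon}{\delta}\bigr)\Bigr).
\]
Expanding $(1-w)^{-2}=1+2w+O(w^2)$ and taking imaginary parts yields
\[
\widetilde C(z,\xi)-\widetilde C(\mathbbm{1},\xi)
=\delta^{-2}\Bigl(4\sqrt{\tfrac{\epsilon}{\delta}}\sin\psi+O\bigl(\tfrac{\epsilon}{\delta}\bigr)\Bigr)
\approx\frac{4\cdot 2^{j/2}l(Q)\sin\psi}{(2^jl(Q))^{3}},
\]
which for generic $\psi$ is $\approx 2^{j/2}$ times larger than the claimed $l(Q)/(2^jl(Q))^{n+1}$. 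Thus the linear-in-$\sqrt{\epsilon/\delta}$ tangential term survives intact after passing to the imaginary part; the ``quadratic'' pieces you expect to cancel it are genuinely of lower order, not of the same order with opposite sign. Put differently, the pointwise bound you are trying to reach is strictly stronger than the standard Calder\'on--Zygmund smoothness estimate $d(z,\mathbbm{1})^{1/2}/d(\mathbbm{1},\xi)^{n+1/2}$ for this kernel, so no amount of Taylor-expanding the kernel in $z$ can deliver it; the mechanism you outline cannot close the $2^{j/2}$ gap and the proposal does not establish the lemma.
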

\noindent
The term $D_1$ is now easy to estimate:
\begin{equation}
\label{eq-3}
D_1\lesssim \frac{\phi(\mathbbm{1})}{|Q|}|Q|\left[\frac{l(Q)^{\alpha}}{\phi(\mathbbm{1})}\sum\limits_{j=1}^m \frac{2^{j\alpha}l(Q)|\Omega_j|}{(2^jl(Q))^{2n+1}}\right] \lesssim l(Q)^{\alpha}.
\end{equation}

We finally proceed to the term $D_2$. First, it follows from the definitions of the functions $f_r$ and from the inequality~\eqref{eq-10} that for all $\xi \in \mathbb S^n, r<1$ and $\rho\in [0,2\pi)$ one has $$\int\limits_{-\rho}^{\rho}|\log|f_r(\xi_1 e^{i\theta},\ldots,\xi_n e^{i\theta})||d\theta\leq B_0.$$
We define sets $Q_j$ as $Q_j=\{z\in \mathbb S^n: d(z,\mathbbm{1})\leq 2^jl(Q)\}$ and choose $\rho:=2^j l(Q)$ for some $j\in \mathbb N$. Integration of the last line with respect to the variable $\xi$ over the set $Q_j$ and further changing variables $\theta$ and $\xi$ gives
\begin{equation}
\label{eq-2}
\rho^{n}\gtrsim 
\int\limits_{-\rho/2}^{\rho/2}\int\limits_{Q_j}|\log|f_r(\xi_1 e^{i\theta},\ldots,\xi_n e^{i\theta})|| d\sigma(\xi) d\theta.
\end{equation}
For each $\theta\in \left(-\rho/2,\rho/2\right)$ and each $\xi\in Q_j$ we define a vector $z$ as $z=(z_1,\ldots,z_n),$ where $z_j:=\xi_j e^{i\theta}.$ We further define a function $F_{\theta}$ by the following formula $F_{\theta}(\xi)=z.$ We finally perform the following change of variables, $z:=F_{\theta}(\xi)$. It follows now from the inequality~\eqref{eq-2} that
\begin{equation}
\label{eq-1}
\rho^{n}\gtrsim \int\limits_{-\rho/2}^{\rho/2}\int\limits_{F_{\theta}(Q_j)}|\log|f_r(z)|||e^{i\theta}|^n d\sigma(\xi) d\theta=\int\limits_{-\rho/2}^{\rho/2}\int\limits_{F_{\theta}(Q_j)}|\log|f_r(z)|| d\sigma(\xi) d\theta.
\end{equation}
We claim that $Q_j/2\subseteq F_{\theta}(Q_j).$ To prove this, we pick a point $\xi\in Q_j/2.$ In order to show that $\xi \in F_{\theta}(Q_j)$ it is sufficient to prove that $\xi e^{-i\theta}\in Q_j$ (for in the last case we can write $\xi=(\xi e^{-i\theta})e^{i\theta}$). We check that $|1-\xi_1 e^{-i\theta}|\leq \rho$ with the help of the triangle inequality:
$$|1-\xi_1 e^{-i\theta}|\leq|1-\xi_1|+|\xi||1-e^{-i\theta}|\leq \frac{\rho}{2}+|\theta|\leq \rho,$$
and our claim follows. The line~\eqref{eq-1} now gives
\begin{equation}
\label{eq0}
\int\limits_{Q_{j-1}}|\log|f_r(z)||d\sigma(z)\lesssim \rho^{n-1}\lesssim (2^{j-1}l(Q))^{n-1}.
\end{equation}
Thanks to the inequality~\eqref{eq0}, we are now ready to finish off the desired bound of the term $D_2$:
\begin{multline}
\label{eq1}
D_2\lesssim \frac{\phi(\mathbbm{1})}{|Q|}\int\limits_{Q}\sum\limits_{j=k+1}^m\int\limits_{Q_j}\log\frac{1}{\phi_r(\xi)}|\widetilde{C}(z,\xi)-\widetilde{C}(\mathbbm{1},\xi)|d\sigma(\xi)d\sigma(z)\lesssim \\ 
\frac{\phi(\mathbbm{1})}{|Q|}|Q|\sum\limits_{j=k+1}^m (2^jl(Q))^{n-1}\frac{l(Q)}{(2^jl(Q))^{n+1}} \lesssim\frac{l(Q)}{\phi(\mathbbm{1})^{\frac{2}{\alpha}-1}}.
\end{multline}
Theorem~\ref{sovsemustal} will now follow from the inequalities~\eqref{eq0},~\eqref{eq-3} and~\eqref{eq-4} simply by letting $\varepsilon$ tend to zero.
\end{proof}

Next, we proceed to the proof of Theorem~\ref{grglthm2}.

\begin{proof}
Let $Q$ be a nonisotropic ball such that $l(Q)^{\gamma}\geq K\phi(\mathbbm{1})$ where $K=(2C_0)^{-\gamma/\alpha}$ for some $0<\gamma\leq\alpha$ to be determined in a moment. Then, from the first claim of Theorem~\ref{sovsemustal} we infer the inequality $\nu(f,Q)\lesssim l(Q)^{\alpha}+l(Q)^{\gamma}.$ On the other hand, if $l(Q)^{\gamma}\leq K\phi(\mathbbm{1})$ for the very same $\gamma$, then the second claim of Theorem~\ref{sovsemustal} provides us with the following estimate
$$\nu(f,Q)\lesssim l(Q)^{\alpha}+l(Q)^{1-\gamma(\frac{2}{\alpha}-1)}.$$
Comparing these inequalities we obtain the following equation:
$\gamma=1-\gamma(2/\alpha-1),$
from where we deduce that $\gamma=\alpha/2.$ In either case, $\nu(f,Q)\lesssim l(Q)^{\alpha/2}$ and Theorem~\ref{grglthm2} follows.
\end{proof}

\section{Proof of Theorem~\ref{grglthm1}}
In the view of the proof of Theorem~\ref{grglthm2}, Theorem~\ref{grglthm1} follows easily and its proof is left to the reader as an exercise. Without giving any details, we only notice that the principal difference between the proofs is that the term $D_2$ in this case can be estimated easier. Indeed, here it suffices to apply the H\"older inequality and utilize the fact that the function $\log \phi$ is in $L^p(\mathbb S^n)$.

\section{Proof of Theorem~\ref{grglthm5}}
Let us now prove that the exponent $p/(p+n)$ is the best possible in Theorem~\ref{grglthm1}.
\begin{theorem}
\label{grglthm5}
Let $p\in\left({n,+\infty}\right).$ Then for each $\delta >0$ there exists an outer function 
$f_0:\mathbb B^n \rightarrow \mathbb C,$ satisfying
$$ f_0 \notin{\mathrm{Lip}_{\frac{\alpha p}{p+n}+\delta}}\left(\mathbbm{1}\right)$$
``in average'' and such that $\log|f_0|\in L^p \left(\mathbb S^n  \right)$ and $|f_0|\in{\mathrm{Lip}_\alpha}\left(\mathbbm{1}\right)$.
\end{theorem}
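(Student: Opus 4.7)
The plan is to exhibit, for each $\delta>0$, an outer function $f_0$ whose boundary modulus $\phi=|f_0|$ satisfies the hypotheses of Theorem~\ref{grglthm1} (namely $|\phi|\in\mathrm{Lip}_\alpha(\mathbbm{1})$ and $\log\phi\in L^p(\mathbb S^n)$) while the associated mean oscillation $\nu(f_0,Q_r)$ fails to be $O(r^{\alpha p/(p+n)+\delta})$ as $r\downarrow 0$. Because the modulus of the candidate $f_0$ equals $\phi$ on $\mathbb S^n$, the H\"older hypothesis will be built into the choice of $\phi$, and the desired lower bound on the mean oscillation will reduce to a lower bound on the oscillation of the argument of $f_0$, i.e., on the conjugate Cauchy--Szeg\"o transform of $\log\phi$.

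\textbf{Step 1 (the modulus).} Fix a small auxiliary parameter $\eta=\eta(\delta)>0$. I would choose $\phi\colon\mathbb S^n\to\mathbb R_+$ so that $\phi(\mathbbm{1})>0$ (to prevent the modulus from killing oscillations on small balls near $\mathbbm{1}$), so that $|\phi(\xi)-\phi(\mathbbm{1})|\leq C_0 d(\xi,\mathbbm{1})^\alpha$ holds, and so that on each dyadic annulus $\Omega_j=\{\xi:d(\xi,\mathbbm{1})\sim 2^{-j}\}$ the function $\log\phi$ carries oscillations of amplitude matching a power $2^{-j(n/p-\eta)}$. A natural model is $\phi(\xi)=\phi(\mathbbm{1})\bigl(1+d(\xi,\mathbbm{1})^\alpha h(\xi)\bigr)$, where $h$ is a bounded ``sawtooth'' profile on each annulus whose signed values are chosen so that the $L^p$-norm of $\log\phi$ on $\Omega_j$ saturates the $L^p$-summability barrier. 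Because $\sup_j\|h\|_\infty<\infty$, $\phi$ is then H\"older-$\alpha$ at $\mathbbm{1}$, whereas the volume estimate $|Q(r)|\sim r^n$ together with the chosen amplitudes gives $\|\log\phi\|_p^p\lesssim 1/\eta$ finite but $\|\log\phi\|_{p'}=\infty$ for every $p'>p$.

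\textbf{Step 2 (the outer function and the oscillation).} Define $f_0:=\mathcal O_\phi$. Then $|f_0|=\phi$ on $\mathbb S^n$, so $|f_0|\in\mathrm{Lip}_\alpha(\mathbbm{1})$ and $\log|f_0|\in L^p(\mathbb S^n)$ hold by construction. Since $|f_0|\approx\phi(\mathbbm{1})$ on small $Q_r$, the mean oscillation satisfies $\nu(f_0,Q_r)\asymp\phi(\mathbbm{1})\,\nu(e^{i\Psi},Q_r)$, where $\Psi(\xi):=\arg f_0(\xi)$ is the boundary value of the conjugate Cauchy--Szeg\"o transform of $\log\phi$. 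To finish the proof I would show that, for the $\phi$ built in Step~1, $\Psi$ has mean oscillation at least $r^{\alpha p/(p+n)}$ on $Q_r$ along a sequence $r_k\to 0$. The computation parallels the proof of Theorem~\ref{grglthm2}, running the shell-decomposition estimate of Lemma~\ref{yadro} in reverse: the contribution of the annulus $\Omega_j$ to $\Psi(\xi)-\Psi(\mathbbm{1})$ for $\xi\in Q_r$ is of order $r/(2^{-j})^{n+1}$ times $\int_{\Omega_j}|\log\phi|\,d\sigma$, and with the profile $h$ chosen so that these contributions do \emph{not} cancel, their geometric sum reaches the desired exponent up to a factor $r^\eta$. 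Taking $\eta<\delta$ completes the contradiction with the putative bound $\nu(f_0,Q_r)\lesssim r^{\alpha p/(p+n)+\delta}$.

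\textbf{Main obstacle.} The key difficulty is the \emph{non-cancellation} step: verifying that the carefully designed oscillation pattern in $\log\phi$ actually produces a non-cancelling contribution to the conjugate Cauchy--Szeg\"o transform. One needs the sign of $\log\phi$ (equivalently the profile $h$) on each annulus $\Omega_j$ to be correlated with the sign of the imaginary part of the Cauchy kernel difference appearing in Lemma~\ref{yadro}, which is a geometric/combinatorial selection problem and, in my view, the only nontrivial part of the construction. Once the correct $h$ is in place, the remaining checks (H\"older continuity of $\phi$ at $\mathbbm{1}$ and $L^p$-integrability of $\log\phi$) are routine volume estimates based on $|Q(r)|\sim r^n$, and the final exponent $\alpha p/(p+n)$ emerges by the same balance-of-terms argument that featured in the proof of Theorem~\ref{grglthm2}.
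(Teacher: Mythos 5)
Your plan is not a proof: the decisive step --- the lower bound showing that the conjugate Cauchy--Szeg\"o transform of your $\log\phi$ actually oscillates at the rate $l(Q)^{\alpha p/(p+n)}$ along a sequence of balls --- is exactly the part you leave open (your ``main obstacle''), and nothing in the proposal indicates how the required non-cancellation would be verified. Worse, the design of Step~1 is internally inconsistent. With $\phi(\mathbbm{1})>0$ fixed and the global condition $|\phi(\xi)-\phi(\mathbbm{1})|\le C_0 d(\xi,\mathbbm{1})^{\alpha}$, on an annulus $\Omega_j$ close to $\mathbbm{1}$ one necessarily has $|\log\phi-\log\phi(\mathbbm{1})|\lesssim 2^{-j\alpha}/\phi(\mathbbm{1})$, so the amplitudes $2^{-j(n/p-\eta)}$ you prescribe are unattainable there whenever $n/p-\eta<\alpha$; and with your ansatz $\phi=\phi(\mathbbm{1})\bigl(1+d^{\alpha}h\bigr)$, $h$ bounded, the function $\log\phi$ is \emph{bounded}, hence lies in $L^{p'}(\mathbb S^n)$ for every $p'$, contradicting your claim that it ``saturates the $L^p$-summability barrier''. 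With amplitudes actually compatible with the H\"older constraint, the shell sums you propose to ``run in reverse'' only reproduce the $D_1$-type upper bound of order $l(Q)^{\alpha}$ and cannot produce a lower bound of order $l(Q)^{\alpha p/(p+n)}$; a genuinely sharp example must let $\log\phi$ become large (the modulus must dip far below $\phi(\mathbbm{1})$ at suitably calibrated places), which your construction excludes by design. So as it stands the argument would fail even if the non-cancellation step were granted.

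The paper proceeds quite differently and avoids any lower-bound construction on the sphere. It takes a known sharp \emph{one-dimensional} example (Medvedev's result that $p/(p+1)$ is best possible on $\mathbb T$): a function $\varphi$ with $\log\varphi\in L^{p/n+\varepsilon}(\mathbb T)$, $\varphi\in\mathrm{Lip}_{\alpha}(1)$, whose outer function $\mathcal O_{\varphi}$ fails $\mathrm{Lip}_{\alpha p_2/(p_2+1)+\sigma}(1)$ ``in average'' for every $\sigma>0$, where $p_2=p/n+\varepsilon$. It then lifts it to the ball by $f_0(z_1,\dots,z_n):=\mathcal O_{\varphi}(z_1)$, so the nonisotropic H\"older condition at $\mathbbm{1}$ and the failure of average smoothness are inherited from the slice variable, and the only real work is a transfer lemma: using Rudin's slice-integration formula, Young's inequality and an $L^q$ bound for the Poisson kernel, one shows $\log\varphi\in L^{p/n+\varepsilon}(\mathbb T)$ implies $\log|f_0|\in L^p(\mathbb S^n)$ (this is where the dimensional shift $p\mapsto p/n$ enters). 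The exponent arithmetic $\alpha p_2/(p_2+1)\to\alpha p/(p+n)$ as $\varepsilon,\sigma\to0$ then gives the statement. If you want to salvage your approach, you would in effect have to reconstruct the one-dimensional sharp example from scratch and then solve the non-cancellation problem in $\mathbb C^n$; the reduction to the slice function is what makes the paper's proof short.
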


\begin{proof} 
We precede the proof with one technical lemma.
\begin{lem}
\label{vtorayalemma}
Let $\varepsilon>0,$ and let $\varphi:\mathbb T \rightarrow \mathbb R_+$ be a function such that $\log \varphi \in L^{(p+\varepsilon)/n}(\mathbb T).$
Define a function $f_0:\overline{\mathbb B^n} \rightarrow \mathbb C$ by the formula $ f_0(z_1 ,\ldots, z_n)= g(z_1),$
where $g:\overline{\mathbb D} \rightarrow \mathbb C$ is given by
\begin{equation*}
g(z)=\exp\biggl[\frac{1}{2\pi}\int\limits_0^{2\pi}\frac{e^{i\theta}+z}{e^{i\theta}-z}\log\varphi(e^{i\theta})d\theta\biggl].
\end{equation*}
Then $f_0$ satisfies $\log|f_0| \in{L^p}(\mathbb S^n).$
\end{lem}
\begin{rem}
This statement seems to be folklore, but as we couldn't find a proof in the literature, we present one here.
\end{rem}
\begin{proof}
 Take a point $\zeta=(\zeta_1,\ldots,\zeta_n) \in \overline{\mathbb B^n},$ where $\zeta_1 =re^{i\beta}.$  We write a formula for the function $\log|f|,$ using the definitions of the functions $f_0$ and $g$:
\begin{multline*}
\log |f_0(\zeta_1 ,\ldots,\zeta_n)| = \log|g(\zeta_1)|= \\
 \log \biggl(\exp\biggl[\int \limits_0^{2\pi}\mathrm{Re} \biggl(\frac{e^{i\theta}+\zeta_1}{e^{i\theta}-\zeta_1}\biggr) \log \varphi (e^{i\theta})d\theta \biggr]\biggr) =\\
\int \limits_0^{2\pi}\frac{1-r^2}{1+r^2-2r\cos\left(\beta-\theta \right)} \log \varphi (e^{i\theta}) d\theta=
\log \varphi \ast P_r \left(\beta \right).
\end{multline*}
Hence, thanks to the formula number 1.4.5 from the book of W. Rudin~\cite{rud} (we mean the formula for the integral of a function of fewer variables), we infer the formula
\begin{multline*}
\int \limits_{\mathbb S^n}|\log|f_0\left(\zeta\right)||^p d\sigma\left(\zeta \right)=
\int \limits_0^1 \int \limits_0^{2\pi} r\left(1-r^2 \right)^{n - 2}|\log\varphi\ast P_r (\beta)|^p d\beta dr=\\
\int \limits_0^1 r(1-r^2)^{n-2} \cdot || \log \varphi \ast P_r||_{{L^p}(\mathbb T)}^{p}dr \leq \ldots\,.
\end{multline*}
Next, the Young inequality allows us to continue the estimates:
\begin{equation}
\label{prosto}
\ldots\leq \int \limits_0^1 \left(1-r^2 \right)^{n-2} \cdot || \log \varphi ||_{{L^{\frac{p}{n} + \varepsilon}} \left( \mathbb T \right)}^{p} 
\cdot  ||P_r ||_{L^q \left( \mathbb T\right)}^{p}dr,
\end{equation}
where $q$ stands for the solution of the following equation: 
$$1+\frac{1}{p}=\frac{1}{q}+\frac{1}{\frac{p}{n}+\varepsilon} .$$

It remains to estimate the norm of the Poisson kernel $\|P_r\|_{L^q \left( \mathbb T \right)}$ for $r \in \left(0,1\right)$ and $q\geq 1.$ We first treat the case when $r \in \left(0,1/2\right),$ which turns out to be easy:
\begin{equation*}
\|P_r \|_{L^q \left( \mathbb T \right)}^{q}=\int \limits_0^{2\pi} \frac{\left(1-r^2 \right)^q d\theta }{\left(1+r^2-2r\cos \theta \right)^q} \leq \int \limits_0^{2\pi} \frac{\left(1-r \right)^q d\theta }{\left(1 - r\right)^{2q} } \lesssim \frac {1}{\left(1-r\right)^q} \lesssim \frac{1}{\left(1-r \right)^{q-1}}.
\end{equation*}
Henceforth we assume that $r \in [1/2, 1).$ We are going to use the fact that if
$\theta \in [0,2\pi)$, then $1-\cos \theta \geq C_2\theta^2$ 
with some universal constant $C_2>0$:
\begin{multline*}
\|P_r \|_{L^q \left( \mathbb T \right)}^{q}=\int \limits_0^{2\pi} \frac{\left(1-r^2 \right)^q d\theta }{\left(1+r^2-2r\cos \theta \right)^q}=\\
\int \limits_0^{2\pi} \frac{\left(1-r^2 \right)^q  d\theta } {\left(\left(1-r \right)^2+2r \left(1-\cos \theta \right)\right)^q}\leq 
\int \limits_0^{2\pi} \frac{\left(1-r \right)^q  d\theta} {\left(\left(1-r \right)^2+2r C_2 \theta^2 \right)^q} =\\
\int \limits_A \frac{\left(1-r \right)^q  d\theta} {\left(\left(1-r \right)^2+2r C_2 \theta^2 \right)^q} + \int \limits_B \frac{\left(1-r \right)^q  d\theta} {\left(\left(1-r \right)^2+2r C_2 \theta^2 \right)^q},
\end{multline*}
where $A$ and $B$ stand for the sets
$$A=\{ \theta \in [0,2\pi ):\left(1-r\right)^2 \geq 2C_2r \theta^2 \} $$
and
$$B=\{ \theta \in [0,2\pi ):\left(1-r\right)^2 < 2C_2r \theta^2 \}$$
correspondingly. Note that $A \cap B = \emptyset$ and also that $A\cup B=[0,2\pi). $
Moreover, we trivially have $|A| \lesssim \left(1-r\right).$
Hence, we deduce that
$$ \|P_r \|_{L^q \left( \mathbb T \right)}^{q}\lesssim \int \limits_A \frac{\left(1-r \right)^q  d\theta} {\left(1-r \right)^{2q}} + \int \limits_B \frac{\left(1-r \right)^q  d\theta} { \theta^{2q}} \lesssim \frac{1}{\left(1-r\right)^{q-1}}.$$
We use the inequality that we have just derived along with~\ref{prosto} and write
$$\int \limits_{\mathbb S^n}|\log|f_0\left(\zeta\right)\|^p d\sigma\left(\zeta\right)\lesssim\int\limits_0^1(1-r)^{n-2}\cdot\biggl(\frac{1}{(1-r)^{q-1}}\biggr)^{\frac{p}{q}} dr.$$
It remains to prove that the number $p_1:=(n-2)-p(q-1)/q$ is strictly larger than $-1.$ The definition of the number $q$ yields
\begin{multline*}
p_1=(n-2)-p\frac{(q-1)}{q}=
 \left( n-2\right) -p \biggl( \frac{1}{{\frac{p}{n}} + \varepsilon} - \frac{1}{p} \biggr) = \left( n-2 \right) - \frac{np - p - n\varepsilon}{p+n\varepsilon}=\\
 \frac{n^2 \varepsilon - p - n\varepsilon}{p+n\varepsilon} = \frac{n^2 \varepsilon}{p+n\varepsilon} - 1 > -1,
\end{multline*}
and the lemma follows.
\end{proof}

Denote $p_2=p/n +\varepsilon.$ Since the one-dimensional bound $p/(p+1)$ is the best possible (see~\cite{medved}), there exists a one-dimensional function $\varphi$ satisfying 
$\log \varphi \in L^{p_2} \left( \mathbb T \right)$ and
$ \varphi \in  \mathrm{Lip}_\alpha \left(1\right)$  such that the corresponding outer function
$\mathcal{O}_\varphi$ lies in the ``average space'' $\mathrm{Lip}_{\alpha p_2/(p_2 +1)}\left(1\right)$
 but does not belong to the space $\mathrm{Lip}_{\alpha p_2/({p_2 +1})+\sigma}\left(1\right)$ for each $\sigma >0.$ We construct according to the method described in Lemma~\ref{vtorayalemma} the functions $f_0$ and $g$ (note that the construction there yields $g=\mathcal{O}_\varphi$, recall~\eqref{tratata}). Then it is obvious that $|f_0| \in \mathrm{Lip}_\alpha \left(\mathbbm{1} \right)$, and we deduce from the lemma that $\log |f_0| \in L^p \left( \mathbb S^n \right).$ Hence, according to Theorem~\ref{grglthm1}, 
$f_0 \in \mathrm{Lip}_{\alpha p/(p+n)}\left(\mathbbm{1}\right)$ ``in average''.
It follows from the choice of the function $\varphi $ that 
$f_0 \notin \mathrm{Lip}_{\alpha p_2/(p_2 +1)+\sigma}\left(\mathbbm{1}\right) $
``in average'' for each $\sigma >0.$ On the other hand,
$$\frac{p_2 \alpha}{p_2 +1} + \sigma = \frac{\left( \frac{p}{n} + \varepsilon \right) \alpha}{\left( \frac{p}{n} + \varepsilon \right) +1} + \sigma = \frac{p \alpha + \varepsilon\alpha n}{p+\varepsilon n+n} +\sigma = \frac{\alpha p}{p+n} +\tilde{\varepsilon},$$
where $\tilde{\varepsilon}\rightarrow 0$ once $\varepsilon$ and $\sigma$ tend to zero. Taking $\varepsilon$ and $\sigma$ sufficiently small, we infer that there exists a function $f_0$, satisfying
$f_0 \notin \mathrm{Lip}_{p \alpha/(p+n)+\delta} \left(\mathbbm{1}\right)$
``in average'', where $\delta$ is exactly the same as in the formulation, and the theorem follows.
\end{proof}

\renewcommand{\refname}{References}

\end{document}